\newcommand{\bigO}{\mathcal O}
\newcommand{\dt}{\, \mathrm{d}t}
\newcommand{\dx}{\, \mathrm{d}x}
\newcommand{\dy}{\, \mathrm{d}y}
\newcommand{\dxi}{\, \mathrm{d}\xi}
\newcommand{\R}{\mathbb{R}}
\newcommand{\N}{\mathbb{N}}
\newcommand{\T}{\mathbb{T}}
\DeclareMathOperator{\F}{{\mathscr F}}
\DeclareMathOperator{\supp}{supp}
\newtheorem{theorem}{Theorem}[section]
\newtheorem{lemma}[theorem]{Lemma}
\newtheorem{corollary}[theorem]{Corollary}
\newtheorem{proposition}[theorem]{Proposition}
\theoremstyle{definition}
\newtheorem{remark}[theorem]{Remark}
\numberwithin{equation}{section}
\title{Non-uniform dependence on initial data for equations of Whitham type}
\author{Mathias Nikolai Arnesen}
\date{\today}
\thanks{The author gratefully acknowledges the support of the project Nonlinear water waves (Grant No. 231668) from the Research Council of Norway.}
\begin{document}

\begin{abstract}
We consider the Cauchy problem
\begin{align*}
\partial_t u+u\partial_x u+L(\partial_x u) &=0, \\
u(0,x)=u_0(x)
\end{align*}
on the torus and on the real line for a class of Fourier multiplier operators $L$, and prove that the solution map $u_0\mapsto u(t)$ is not uniformly continuous in $H^s(\T)$ or $H^s(\R)$ for $s>\frac{3}{2}$. Under certain assumptions, the result also hold for $s>0$.  The class of equations considered includes in particular the Whitham equation and fractional Korteweg-de Vries equations and we show that, in general, the flow map cannot be uniformly continuous if the dispersion of $L$ is weaker than that of the KdV operator. The result is proved by constructing two sequences of solutions converging to the same limit at the initial time, while the distance at a later time is bounded below by a positive constant.
\end{abstract}

\maketitle

\section{Introduction}
We consider the Cauchy problem
\begin{subequations}
	\begin{gather}
		\partial_t u+u\partial_x u+L(\partial_x u)=0, \label{eq: whitham} \\
		u(x,0)=u_0(x), \,\, x\in \R \,\, \text{or} \,\, x\in \T, \,\, t\in \R. \label{eq: whitham id}
	\end{gather}
\end{subequations}
on the torus $\T$ and on the real line $\R$. The operator $L$ is a Fourier multiplier operator with symbol $m(\xi)$, meaning that
\begin{equation}
\widehat{L f}(\xi)=m(\xi)\widehat{f}(\xi).
\end{equation}
A concrete example is the Whitham equation where $m(\xi)=\sqrt{\frac{\tanh(\xi)}{\xi}}$. The Whitham equation was introduced by Whitham in 1967 as a better alternative to the Korteweg--de Vries (KdV) equation for modelling shallow water waves \cite{Whitham1967vma}, and features the exact linear dispersion relation for travelling gravity water waves (see \cite{Lannes2013twp} for a rigorous justification of \eqref{eq: whitham} as a model for shallow water waves and \cite{Moldabayev2015twe} for a derivation of it from the Euler equations via exponential scaling). 

The recent papers \cite{EEP} and \cite{Ehrnstrom2016olw} concern local well-posedness for the Whitham equation and related nonlinear and nonlocal dispersive equations with nonlinearities of low regularity. These results are, when comparable, in line with the earlier investigations \cite{Abdelouhab1989nmf} \cite{Saut1976rot}. For problems with homogeneous symbols and smooth nonlinearities, local well-posedness has been lowered to $s\leq \frac{3}{2}$ in \cite{Linares2014dpo} using dispersive properties, with the lower bound for $s$ depending on the strength of the dispersion. The paper at hand concerns further regularity of the flow map, or rather the lack thereof. We prove that in the periodic case, the flow map is not uniformly continuous on any bounded set of $H^s(\T)$ for $s>\frac{3}{2}$ for any symbol $m$ that is even and locally bounded, and on the real line the flow map is not uniformly continuous on any bounded set of $H^s(\R)$ for $s>\frac{3}{2}$ if $m$ does not grow "too" quickly. The results are also extended to $0<s\leq \frac{3}{2}$ under certain conditions. The paper is motivated by a series of similar results for other model equations (e.g., for the Camassa--Holm (CH) equation \cite{Himonas2009nud} and the Benjamin--Ono (BO) equation \cite{Koch2005nwi}), and indeed for the Euler equations themselves \cite{Himonas2010nud}, as well as recent investigations into non-local dispersive equations of Whitham type with very general, and in particular also inhomogeneous, symbols $m$ (\cite{EGW},\cite{Arnesen2016eos}, \cite{Ehrnstrom2016olw}) and recent work connected to well-/ill-posedness for the Whitham equation specifically. In particular, we mention the recent positive verifications of two conjectures of Whitham, namely that for certain initial data the solution exhibits wave-breaking in finite time \cite{Hur2015bit} and the existence of a highest cusped wave \cite{Ehrnstrom2015owc}.

Our results are contained in Theorems \ref{thm: periodic} and \ref{thm: line} below, for the periodic case and the real line case, respectively.
\begin{theorem}[Non-uniform continuity on $\T$]
\label{thm: periodic}
Assume that $m\in L_{loc}^\infty (\R)$ is even, there exists $N>0$ such that $m(\xi)$ is continuous for $|\xi|>N$ and that 
\begin{equation*}
|m(\xi)|\lesssim |\xi|^p
\end{equation*}
for some $p>0$ when $|\xi|\gg 1$. Then:
\begin{itemize}
\item[(i)] If $s>\frac{3}{2}$, the flow map $u_0\mapsto u(t)$ for the Cauchy problem \eqref{eq: whitham}-\eqref{eq: whitham id} on the torus is not uniformly continuous from any bounded set in $H^s(\T)$ to $C([0,T);H^s(\T))$.
\item[(ii)] Let $0<s\leq \frac{3}{2}$. When the flow map exists on $H^s(\T)$, (i) remains true.
\end{itemize}
\end{theorem}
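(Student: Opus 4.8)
The plan is to adapt the now-standard ``two-sequence'' approach pioneered by Koch--Tzvetkov and Himonas--Kenig for BO and CH. The key observation is that the nonlinearity $u\partial_x u$ is the dominant obstruction to uniform continuity, and the dispersive term $L(\partial_x u)$, being linear, can be handled perturbatively as long as its symbol does not grow too fast (hence the hypothesis $|m(\xi)|\lesssim|\xi|^p$). First I would construct explicit approximate solutions of the form
\begin{equation*}
u^{\omega,n}_{\mathrm{app}}(x,t)=\omega n^{-1}+n^{-s}\cos\bigl(nx-\omega t\bigr),
\end{equation*}
indexed by $n\in\N$ and $\omega\in\{0,1\}$ (or $\omega\in\{1,-1\}$), where the low-frequency part $\omega n^{-1}$ is a constant (or slowly varying) solution that transports the high-frequency oscillation at speed $\omega$; the role of the parameter $\omega$ is precisely that two choices of $\omega$ give high-frequency wavepackets travelling at \emph{different} speeds, so that after a fixed time $t>0$ they are out of phase by an $O(1)$ amount in $H^s$, while at $t=0$ they differ only by $O(n^{-1})\to0$. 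One checks that $\|u^{\omega,n}_{\mathrm{app}}\|_{H^s}$ is bounded uniformly in $n$, and that the residual $F^{\omega,n}:=\partial_t u^{\omega,n}_{\mathrm{app}}+u^{\omega,n}_{\mathrm{app}}\partial_x u^{\omega,n}_{\mathrm{app}}+L(\partial_x u^{\omega,n}_{\mathrm{app}})$ is small in $H^s$: the worst term is $L(\partial_x u^{\omega,n}_{\mathrm{app}})$, which in $H^s$ has size $\sim n^{-s}\cdot n\cdot n^{p}\cdot n^{s}=n^{1+p}$ --- this is \emph{not} small, so one must choose the phase of the approximate solution to cancel the leading dispersive contribution by letting the transport speed absorb $m(n)$, i.e.\ replace $\omega t$ by $(\omega+m(n)/n\cdot n)t$-type corrections; more precisely the ansatz should be $u^{\omega,n}_{\mathrm{app}}=\omega n^{-1}+n^{-s}\cos\bigl(nx-(\omega+nm(n))t\bigr)$ type so that the linear part is solved exactly and only the genuinely nonlinear self-interaction $n^{-2s}\cdot n$ remains, which is $o(1)$ in $H^s$ precisely when $s>\tfrac12$ after accounting for the $H^s$ norm of a product of two frequency-$n$ waves (whose interaction lives at frequency $0$ and $2n$).

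Next I would invoke local well-posedness (which is assumed to hold on the relevant range of $s$, either from $s>\tfrac32$ or by hypothesis for $0<s\le\tfrac32$) together with an energy estimate to show that the genuine solution $u^{\omega,n}$ with the same initial data stays $O(n^{-r})$-close in $H^s$ to $u^{\omega,n}_{\mathrm{app}}$ on a time interval $[0,T]$ independent of $n$, for some $r>0$. This is the technical heart: one writes $v=u^{\omega,n}-u^{\omega,n}_{\mathrm{app}}$, derives the equation it satisfies (with forcing $-F^{\omega,n}$), and runs a Gronwall argument in $H^\sigma$ for a suitable $\sigma$ (possibly $\sigma<s$, then upgraded, to handle the loss of derivative in $u\partial_x u$ via the commutator/Kato--Ponce estimate). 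The uniform-in-$n$ bound on $\|u^{\omega,n}_{\mathrm{app}}\|_{H^s}$ and the smallness of the residual feed directly into this estimate.

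Finally I would assemble the contradiction: take the two sequences $u^{1,n}$ and $u^{0,n}$ (or $u^{1,n},u^{-1,n}$). At $t=0$ their data differ by $O(n^{-1})\to0$ in $H^s$, yet both sequences have $H^s$-norms bounded by a fixed constant, so they lie in a common bounded set; and at a fixed time $t\in(0,T]$, using the approximation result, $\|u^{1,n}(t)-u^{0,n}(t)\|_{H^s}\gtrsim\|u^{1,n}_{\mathrm{app}}(t)-u^{0,n}_{\mathrm{app}}(t)\|_{H^s}-o(1)\gtrsim|\sin(\text{phase difference})|\gtrsim c>0$ for $n$ large, where the phase difference is proportional to $t$ times the speed gap and is bounded away from multiples of $\pi$. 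This contradicts uniform continuity of the flow map on that bounded set. I expect the main obstacle to be precisely the choice of the corrected phase in the ansatz so that the dispersive residual $L(\partial_x u_{\mathrm{app}})$ is exactly absorbed (the constraint that $m$ be even and locally bounded with polynomial growth is exactly what makes the substitution $nx-(\omega+nm(n))t$ meaningful and the remaining nonlinear residual controllable), and, secondarily, the derivative loss in the nonlinearity, which forces the Gronwall estimate to be done carefully --- possibly at a lower Sobolev index with an interpolation step, or with a commutator estimate --- in order to get a uniform time of existence and an $O(n^{-r})$ error with $r>0$.
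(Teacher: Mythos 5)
Your plan for part (i) is essentially the paper's proof: the corrected ansatz $\omega n^{-1}+n^{-s}\cos(nx-(nm(n)+\omega)t)$ is exactly the approximate solution used there, the residual reduces to the self-interaction $-\tfrac12 n^{-2s+1}\sin(2(\cdot))$, the closeness to exact solutions is obtained by a Gr\"onwall estimate at low regularity (the paper does it in $L^2$, where the transport structure lets one integrate by parts instead of invoking commutator estimates) followed by interpolation against a crude $H^k$ bound with $k>s$, and the conclusion follows from the triangle inequality and $\|2n^{-s}\sin(\cdot)\sin(t)\|_{H^s}\simeq|\sin t|$. For $s>\tfrac32$ this is correct in outline and in all essentials identical to the paper.

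There is, however, a genuine gap in your treatment of part (ii), i.e.\ $0<s\le\tfrac32$. Your argument needs the exact solution to stay close to the approximate one \emph{and} the two approximate solutions to separate, both on a time interval $[0,T]$ independent of $n$. But the only tool available to control $\|u^{\omega,n}(t)-u^{\omega,n}_{\mathrm{app}}(t)\|_{H^s}$ is the interpolation between the $L^2$ Gr\"onwall bound and a high-norm bound $\|\cdot\|_{H^k}\lesssim n^{k-s}$ with $k>\tfrac32$; the latter comes from the energy estimates (Lemma \ref{thm: existence time} and Corollary \ref{cor: energy estimate}), which are only valid up to a time of order $\|u^{\omega,n}_{\mathrm{app}}(0)\|_{H^\sigma}^{-1}\simeq n^{s-\sigma}$ for $\sigma>\tfrac32$, and this tends to $0$ as $n\to\infty$ when $s\le\tfrac32$. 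Merely assuming the flow map exists on $H^s(\T)$ does not repair this. So you cannot conclude separation at a fixed time $t>0$; you need the two solutions to be $O(1)$ apart already by time $t_n\to0$. The paper achieves this with an extra idea you do not mention: the invariance $u(x,t)\mapsto u(x-\omega t,t)+\omega$ of \eqref{eq: whitham}, which turns the $\omega$-perturbation of the data into a Galilean boost of a single solution $v_n^0$. Choosing $\omega_{1,2}=\pm\frac{\pi}{2}(nt_n)^{-1}$ with $t_n\in[n^{-1+\varepsilon},n^{s-\sigma}]$ makes the two carrier waves exactly $\pi$ out of phase at time $t_n$ (so their $H^s$ distance is $\simeq1$) while $|\omega_1-\omega_2|\lesssim n^{-\varepsilon}\to0$ at time $0$. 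Without this (or an equivalent mechanism forcing separation before the shrinking control time), your argument does not yield part (ii).
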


\begin{theorem}[Non-uniform continuity on $\R$]
\label{thm: line}
Assume that $m$ is even, $m\in L_{loc}^\infty (\R)$ and there exists $N>0$, $0\leq \gamma<2$ and a constant $C>0$ such that 
\begin{equation}
\label{eq: cond. m}
|m(\xi+y)-m(\xi)|\leq C |y| |\xi|^{\gamma-1}
\end{equation}
for all $|\xi|>N$ and $|y|$ sufficiently small. In particular, this means that $m(\xi)$ is continuous for $|\xi|>N$ and that $|m(\xi)|\lesssim |\xi|^\gamma$ for large $|\xi|$. Then:
\begin{itemize}
\item[(i)] If $s>\frac{3}{2}$, the flow map $u_0\mapsto u(t)$ for the Cauchy problem
\eqref{eq: whitham}-\eqref{eq: whitham id} on the line is not uniformly continuous from any bounded set in $H^s(\R)$ to $C([0,T);H^s(\R))$.
\item[(ii)] Let $\frac{1}{2}<r<2$ and $0<s< \frac{r}{2}$. If the Cauchy problem \eqref{eq: whitham}-\eqref{eq: whitham id} is locally well-posed in $H^s(\R)$ in the sense of Theorem \ref{thm: local well-posedness} and $m$ satisfies the lower bound
\begin{equation*}
|m(\xi)|\gtrsim |\xi|^r
\end{equation*}
for $|\xi|\gg 1$ in addition to \eqref{eq: cond. m}, then (i) is true also for $0<s< \frac{r}{2}$.
\end{itemize}
\end{theorem}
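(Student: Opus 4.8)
The plan is to follow the now-standard approximate-solution strategy used for CH, BO, and the Euler equations, adapted to the nonlocal dispersive setting. The idea is to build, for each large frequency $n\in\N$, two families of approximate solutions of the form
\begin{equation*}
u^{\omega,n}_{\mathrm{ap}}(t,x)=\omega n^{-1}+n^{-s}\cos\bigl(nx-\omega t\, r(n)\bigr),\qquad \omega\in\{0,1\},
\end{equation*}
where $r(n)$ encodes the transport plus dispersive phase shift (roughly $\omega\,n^{-1}\cdot n - \omega\, m(n)$, coming from the linearization $\partial_t u+\omega n^{-1}\partial_x u+L\partial_x u=0$ around the constant $\omega n^{-1}$). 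The low-frequency term $\omega n^{-1}$ is there precisely so that the two families travel at slightly different speeds, producing an $O(1)$ separation at positive times despite agreeing to $o(1)$ at $t=0$. First I would compute the residual $F^{\omega,n}:=\partial_t u^{\omega,n}_{\mathrm{ap}}+u^{\omega,n}_{\mathrm{ap}}\partial_x u^{\omega,n}_{\mathrm{ap}}+L\partial_x u^{\omega,n}_{\mathrm{ap}}$ and check that, in $H^\sigma$ for the relevant $\sigma$, it is small — here the hypothesis $|m(\xi)|\lesssim|\xi|^\gamma$ with $\gamma<2$ (and, for part (ii), the matching between $r$ and $s$) is what makes the dispersive contribution of the residual manageable, since the troublesome term is $n^{-s}$ times something like $m(n)\cdot n$ which must be beaten by the gain from the $n^{-1}$ amplitude of the drift or by choosing the phase correctly.

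Next I would invoke the local well-posedness theory (Theorem~\ref{thm: local well-posedness} for part (ii), and the standard $H^s$, $s>3/2$, theory for part (i)) together with energy estimates to control the difference between the genuine solution $u^{\omega,n}$ with data $u^{\omega,n}_{\mathrm{ap}}(0,\cdot)$ and the approximate solution $u^{\omega,n}_{\mathrm{ap}}$: a Gronwall argument applied to $\|u^{\omega,n}-u^{\omega,n}_{\mathrm{ap}}\|_{H^s}$ driven by the small residual $F^{\omega,n}$ gives $\|u^{\omega,n}(t)-u^{\omega,n}_{\mathrm{ap}}(t)\|_{H^s}\to 0$ uniformly on $[0,T]$. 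One then checks the three defining properties: (a) $\|u^{0,n}(0)-u^{1,n}(0)\|_{H^s}\to 0$ (the cosines are the same, only the $\omega n^{-1}$ drift differs, contributing $O(n^{-1})$, and on the line one localizes with a fixed Schwartz cutoff so the constant sits in $H^s$); (b) $\|u^{\omega,n}(t)\|_{H^s}\lesssim 1$ uniformly, so the sequences lie in a fixed bounded set; (c) $\liminf_{n}\|u^{0,n}(t)-u^{1,n}(t)\|_{H^s}\gtrsim |\sin(t\,\theta)|$ for the appropriate phase discrepancy $\theta$, which is bounded below on a subinterval of $[0,T)$. Properties (a)–(c) together contradict uniform continuity of the flow map on bounded sets.

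The main obstacle I anticipate is the residual estimate in the low-regularity regime of part (ii) and, relatedly, getting the phase $r(n)$ exactly right so that (c) survives: one needs the dispersive phase $t\,m(n)$ (or its relevant combination) to neither vanish nor blow up uncontrollably as $n\to\infty$, which is where the two-sided bound $|\xi|^r\lesssim|m(\xi)|\lesssim|\xi|^\gamma$ and the constraint $0<s<r/2$ enter — the amplitude $n^{-s}$ must be large enough relative to $n^{-r/2}$-type error terms generated when commuting $L$ past the nonlinearity. On the real line the condition \eqref{eq: cond. m}, a Lipschitz-type bound on $m$, is exactly what is needed to estimate differences $m(\xi+k)-m(\xi)$ arising from the product $u_{\mathrm{ap}}\partial_x u_{\mathrm{ap}}$ (which shifts frequencies by the low mode), and I would use it together with the Kato–Ponce commutator estimate in the energy step. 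A secondary technical point is that on $\R$ the approximate solution is not in $H^s$ as written, so everything must be done with a smooth spatial cutoff of width growing slowly in $n$ (or a fixed cutoff), and one must verify that cutting off neither destroys smallness of the residual nor the lower bound (c); this is routine but must be tracked carefully because the constant background interacts with the cutoff.
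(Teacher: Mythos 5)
Your overall strategy (low-frequency drift plus high-frequency wave, residual estimate, Gr\"onwall comparison with exact solutions, triangle inequality at $t=0$ and $t>0$) is the paper's strategy, but the points you dismiss as routine are exactly where the content of the real-line theorem lives, and as written the proposal has genuine gaps. First, the option of ``a fixed cutoff'' fails: the paper localizes with $\varphi_\lambda(x)=\varphi(x/\lambda^\delta)$, $\max\{1,\gamma\}<\delta<2$, precisely so that $\widehat{\varphi_\lambda}$ concentrates at scale $\lambda^{-\delta}$ around the carrier frequency $\lambda$; the commutator $[L,\varphi_\lambda]\sin(\Phi)$ and the term $L(\varphi_\lambda'\cos\Phi)$ are then controlled via \eqref{eq: cond. m} and come out of size $\lambda^{\gamma-s-\delta}$ and $\lambda^{-\delta-s+\gamma}$, which is where the hypothesis $\gamma<2$ (allowing $\delta>\gamma$) actually enters. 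With a cutoff of fixed width these terms are of order $\lambda^{\gamma-1}$ relative to the amplitude and the residual is not small for $\gamma\geq 1$. Your attribution of the role of $\gamma<2$ to ``beating $m(n)\cdot n$'' is off: that term cancels exactly against $\varphi_\lambda L(\sin\Phi)=m(\lambda)\varphi_\lambda\sin\Phi$ in the residual, whatever the size of $m$. Second, on $\R$ the low-frequency part cannot be a constant, and once it is a localized bump $\omega\lambda^{-1}\tilde\varphi_\lambda$ it does not stay put: the paper takes $u_l$ to be the \emph{exact} solution with that data and must then control $u_l(t)-u_l(0)$ and $L(\partial_x u_l)$ over times $T_0\simeq\lambda^{1-\delta/2}$. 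Because $m$ is inhomogeneous, this requires a separate argument (Proposition \ref{prop: approx scaling}) comparing $u_l$ to a rescaled solution; this is the step that distinguishes the Whitham-type case from BO/fKdV and it is absent from your outline.

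For part (ii) there is a further gap: your Gr\"onwall argument needs, besides the small $L^2$ bound on the difference, a high-norm bound $\|u_{\omega,\lambda}(t)\|_{H^k}\lesssim\lambda^{k-s}$ valid on a time interval independent of $\lambda$, and for $s\leq\frac32$ the energy estimates of Lemma \ref{thm: existence time} only give this on intervals shrinking like a negative power of $\lambda$. The paper gets around this by using the conserved quantities $\int u^2$ and $\frac12\int uLu-\frac16\int u^3$ together with the lower bound $|m(\xi)|\gtrsim|\xi|^r$ to obtain $\|u(t)\|_{H^{r/2}}\simeq\|u(0)\|_{H^{r/2}}$ for all $t$; this is where the constraints $r>\frac12$ and $s<\frac r2$ come from, not from ``$n^{-r/2}$-type error terms when commuting $L$ past the nonlinearity.'' Without this conservation-law input, the interpolation step in part (ii) does not close.
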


\begin{remark}
Local well-posedness of \eqref{eq: whitham} is in general known only for $s>\frac{3}{2}$ (cf. Theorem \ref{thm: local well-posedness} below), hence we have to assume the existence of the flow map in part (ii) of Theorem \ref{thm: periodic} and \ref{thm: line}. For some specific choices of $m$ well-posedness results for $s\leq \frac{3}{2}$ are known (see for instance \cite{Linares2014dpo}).
\end{remark}
\begin{remark}
The additional condition $|m(\xi)|\gtrsim |\xi|^r$ for $|\xi|\gg 1$ and the bounds on $s$ in Theorem \ref{thm: line} (ii) come from using conservation laws for \eqref{eq: whitham} to bound the $H^\sigma(\R)$ norm of the solution in terms of the norm of the initial data for $\sigma>s$ (cf. the end of Section \ref{sec: line}). These conditions can be improved upon in cases where more conservation laws are known, as done in \cite{Koch2005nwi} for the BO equation in order to cover all $s>0$.
\end{remark}

The assumptions of Theorems \ref{thm: periodic} and \ref{thm: line} cover, for example, the Whitham equation, and the fractional Korteweg--de Vries (fKdV) equation where $m(\xi)=|\xi|^\alpha$ for any $\alpha\geq 0$ in the periodic case and $0\leq \alpha<2$ on $\R$. One would expect the strength of the dispersion to be the essential property deciding the regularity of the flow map, with stronger dispersion giving greater regularity. Theorem \ref{thm: line} shows that this is the case, for while the restriction $\gamma<2$ in Theorem \ref{thm: line} appears in the proof from our construction of a specific approximate solution to \eqref{eq: whitham}, it is, in fact, optimal. When $\gamma=2$ our assumptions includes the KdV equation for which the flow map is known to be locally Lipschitz in $H^s(\R)$ for $s>-\frac{3}{4}$ \cite{Kenig1996abe}, meaning that Theorem \ref{thm: line} is not true in this case. For $0\leq \gamma<2$ it was proved in \cite{Molinet2001ipi} that for $m(\xi)$ such that $p(\xi)=\xi m(\xi)$ is differentiable and satisfying $|p'(\xi)|\leq |\xi|^\gamma$ for $0\leq \gamma<2$, which implies that the the assumptions of Theorem \ref{thm: line} are satisfied, the flow map cannot be $C^2$ in $H^s(\R)$ for any $s\in \R$. Our findings are in agreement with, and improves upon, these resuls. In the period case \eqref{eq: whitham} has, in a sense, no dispersive effect as it is invariant under the transformation
\begin{equation*}
u(x,t)\mapsto v(x,t)=u(x-t\omega,t)+\omega.
\end{equation*}
Having no restriction on $p$ in Theorem \ref{thm: periodic} is therefore perfectly in line with the notion that the strength of the dispersion is the decisive factor for the regularity of the flow map. To avoid this situation, one often considers initial data having zero mean $\int_\T f(x)\dx=\widehat{f}(0)$. In this case the flow map of the KdV equation is known to be Lipschitz continuous in $H^s(\T)$ for $s\geq 0$ \cite{Bourgain1993ftr} and thus Theorem \ref{thm: periodic} fails for $p=2$. In fact, it fails for the KdV in $H^s(\T)$ for $s>-\frac{1}{2}$ \cite{Kenig1996abe}. We will, however, not consider this case here and make no assumption on the mean of the initial data in the periodic case. 

We will prove Theorems \ref{thm: periodic} and \ref{thm: line} using a method based on \cite{Koch2005nwi}, where nonuniform dependence on initial data was established for the BO equation on $\R$, describing the effect of a low-frequency perturbation on a high-frequency wave. In \cite{Molinet2007gwp} the proof of \cite{Koch2005nwi} for the BO equation on the line is adapted to the simpler periodic case for the fKdV equation. For the periodic case, the arguments are easily extended to operators with more general symbols $m$, and the proof we present for Theorem \ref{thm: periodic} is a straightforward extension of that of \cite{Molinet2007gwp} and \cite{Koch2005nwi}. Non-uniform continuity for fractional KdV equations on the line has not been proved for general order $0\leq \alpha<2$, but as the symbol $m(\xi)=|\xi|^\alpha$ is homogeneous the equation enjoys scaling properties similar to those of the BO equation and the procedure of \cite{Koch2005nwi} should therefore be applicable without too much difficulty. The Whitham equation however, or indeed any equation with inhomogeneous symbol $m$, does not share these properties, and different argumentation is therefore required (see Section \ref{sec: line} and in particular Proposition \ref{prop: approx scaling}).

Our paper is structured as follows: Section \ref{sec: preliminaries} is devoted to some preliminary results on local well-posedness, existence time and energy estimates that are crucial ingredients in the proofs of Theorems \ref{thm: periodic} and \ref{thm: line}. In Sections \ref{sec: periodic} and \ref{sec: line} we prove Theorem \ref{thm: periodic} and Theorem \ref{thm: line}, respectively.

\section{Preliminaries}
In this section we state results on the existence of solutions to equation \eqref{eq: whitham} with initial data \eqref{eq: whitham id} and estimates on the existence time and $H^s$-norm of the solutions. All the results in this section hold equally on $\T$ and on $\R$, and we will denote by $H^s$ either $H^s(\T)$ or $H^s(\R)$. The main result is the following:
\label{sec: preliminaries}
\begin{theorem}[\cite{Ehrnstrom2016olw}]
\label{thm: local well-posedness}
Assume that $m$ is even, $m\in L_{loc}^\infty(\R)$ and that $|m(\xi)\lesssim |\xi|^p$ for some $p$ when $|\xi|>1$. Then, for $s>\frac{3}{2}$ and $u_0\in H^s$ there is a maximal $T>0$ depending only on $\|u_0\|_{H^s}$, and a unique solution $u$ to \eqref{eq: whitham}-\eqref{eq: whitham id} in the class $C\left([0,T);H^s\right)$. The solution depends continuously on the initial data, i.e. the map $u_0 \mapsto u(t)$ is continuous from $H^s$ to $C\left([0,T);H^s\right)$.
\end{theorem}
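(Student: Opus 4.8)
The plan is to prove Theorem~\ref{thm: local well-posedness} by the classical energy method for quasilinear equations, in the Kato--Bona--Smith framework. The key structural observation is that the dispersive term is \emph{energy neutral}: since $m$ is real and even, the symbol $i\xi m(\xi)$ of $L\partial_x$ is purely imaginary, so $L\partial_x$ is skew-adjoint on $L^2$ and commutes with the Bessel potential $\Lambda^s:=(1-\partial_x^2)^{s/2}$; hence $\real\langle \Lambda^s L\partial_x u,\Lambda^s u\rangle_{L^2}=0$ and, as far as $H^s$ estimates are concerned, \eqref{eq: whitham} behaves like the quasilinear transport equation $\partial_t u+u\partial_x u=0$. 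Moreover $i\xi m(\xi)$ is real-valued and measurable, so $e^{-tL\partial_x}$ is a unitary group on every $H^\sigma$; the growth hypothesis $|m(\xi)|\lesssim|\xi|^p$ serves only to give meaning to $L\partial_x u$ in a sufficiently large Sobolev space and plays no quantitative role in the estimates.

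First I would regularize, replacing \eqref{eq: whitham} by $\partial_t u^\varepsilon + J_\varepsilon\big((J_\varepsilon u^\varepsilon)\,\partial_x J_\varepsilon u^\varepsilon\big) + L\partial_x u^\varepsilon = 0$, with $J_\varepsilon$ a Friedrichs mollifier. For each fixed $\varepsilon>0$ this has a unique local-in-time solution by Picard iteration on the Duhamel formulation built from the unitary group $e^{-tL\partial_x}$ (one cannot treat $L\partial_x$ as a bounded perturbation since it has positive order, but the group is harmless). The crucial point is a uniform-in-$\varepsilon$ a priori bound: applying $\Lambda^s$, pairing with $\Lambda^s u^\varepsilon$, using that $J_\varepsilon$ is self-adjoint and commutes with $\Lambda^s$ and that $\real\langle\Lambda^s L\partial_x u^\varepsilon,\Lambda^s u^\varepsilon\rangle=0$, and invoking the Kato--Ponce commutator estimate $\norm{[\Lambda^s,f]g}_{L^2}\lesssim\norm{\partial_x f}_{L^\infty}\norm{g}_{H^{s-1}}+\norm{f}_{H^s}\norm{g}_{L^\infty}$ together with the integration by parts $\langle f\partial_x g,g\rangle=-\tfrac12\langle(\partial_x f)g,g\rangle$, one arrives at $\frac{d}{dt}\norm{u^\varepsilon}_{H^s}^2\lesssim\norm{\partial_x u^\varepsilon}_{L^\infty}\norm{u^\varepsilon}_{H^s}^2$. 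This is where $s>\tfrac32$ enters, and is essentially sharp for the method: the Sobolev embedding $H^{s-1}\hookrightarrow L^\infty$ (valid precisely for $s-1>\tfrac12$, on both $\T$ and $\R$) bounds $\norm{\partial_x u^\varepsilon}_{L^\infty}$ by $\norm{u^\varepsilon}_{H^s}$, so $\frac{d}{dt}\norm{u^\varepsilon}_{H^s}^2\lesssim\norm{u^\varepsilon}_{H^s}^3$ and Gronwall gives a bound on $[0,T]$, uniform in $\varepsilon$, with $T\simeq\norm{u_0}_{H^s}^{-1}$ depending only on $\norm{u_0}_{H^s}$.

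Next I would pass to the limit $\varepsilon\to0$. The uniform $H^s$ bound gives weak-$*$ compactness, but to obtain a genuine solution I would show $\{u^\varepsilon\}$ is Cauchy in $C([0,T];L^2)$: the $L^2$ estimate for $u^\varepsilon-u^{\varepsilon'}$ again loses nothing from the dispersive term, the transport difference is controlled by the uniform $H^s$ bounds times $\norm{u^\varepsilon-u^{\varepsilon'}}_{L^2}$, and the mollifier mismatch contributes a term that is $o(1)$ as $\varepsilon,\varepsilon'\to0$; Gronwall yields convergence in $C([0,T];L^2)$, and interpolation with the uniform $H^s$ bound upgrades it to $C([0,T];H^{s'})$ for every $s'<s$. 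Choosing $s'\in(\tfrac32,s)$ makes $H^{s'-1}$ a Banach algebra, which suffices to pass to the limit in the nonlinearity and identify $u$ as a solution lying in $L^\infty([0,T];H^s)\cap C([0,T];H^{s'})$, with $\norm{u(t)}_{H^s}\lesssim\norm{u_0}_{H^s}$ by weak lower semicontinuity. Uniqueness follows from the same $L^2$ difference estimate applied to two solutions.

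The main obstacle, as always for quasilinear problems, is upgrading this to the full conclusion: that $u\in C([0,T];H^s)$ at the top regularity and that the flow map $u_0\mapsto u(t)$ is continuous from $H^s$ to $C([0,T];H^s)$. I would handle both by the Bona--Smith argument: approximate $u_0$ by $u_0^\delta=S_\delta u_0$ (a smooth high-frequency truncation), run the corresponding solutions $u^\delta$, and show via the energy method that $\sup_{t\in[0,T]}\norm{u^\delta(t)-u(t)}_{H^s}\to0$ as $\delta\to0$; continuity of $t\mapsto\norm{u^\delta(t)}_{H^s}$ is immediate from the energy identity for the smooth solution, uniform convergence transfers it to $u$, and combining the $H^s$-approximation with the $L^2$-Lipschitz dependence on data (and interpolation) gives continuous dependence. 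The delicate technical heart is the quantitative rate $\norm{u^\delta-u}_{H^s}=o(1)$: this requires estimating the commutator in the energy identity for the difference using only $\norm{u^\delta}_{H^{s+1}}\lesssim\delta^{-1}\norm{u_0}_{H^s}$ balanced against $\norm{u^\delta-u}_{H^{s-1}}=O(\delta\,\norm{u_0}_{H^s})$, with all constants depending only on $\norm{u_0}_{H^s}$ and on a bound for the lower-order norms along the flow.
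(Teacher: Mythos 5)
The paper does not prove this theorem: it is imported verbatim from \cite{Ehrnstrom2016olw}, so there is no in-paper proof to compare against line by line. That said, your outline is the standard and correct route (Friedrichs regularization, skew-adjointness of $L\partial_x$ killing the dispersive contribution to the $H^s$ energy, Kato--Ponce commutator estimate plus $H^{s-1}\hookrightarrow L^\infty$ for $s>\tfrac32$, $L^2$-contraction for uniqueness, Bona--Smith for continuity at top regularity), and its a priori estimate stage coincides exactly with what the paper \emph{does} prove, namely Lemma \ref{thm: existence time}: there too one mollifies, observes that $\int \Lambda^s L(\partial_x J_\varepsilon u)\,\Lambda^s J_\varepsilon u\,dx=0$ because $m$ is even, and invokes the commutator estimate from \cite{Himonas2009nud} to land on $\frac{d}{dt}\|u\|_{H^s}^2\lesssim\|\partial_x u\|_{L^\infty}\|u\|_{H^s}^2$. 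One quantitative imprecision in your sketch, precisely at the point you yourself flag as the delicate heart: for the Bona--Smith step the crude rates $\|u_0^\delta\|_{H^{s+1}}=O(\delta^{-1})$ and $\|u^\delta-u\|_{H^{s-1}}=O(\delta)$ only balance to $O(1)$ in the difference estimate, which does not give $\|u^\delta-u\|_{H^s}\to0$; you need the little-$o$ refinements $\|S_\delta u_0\|_{H^{s+1}}=o(\delta^{-1})$ and $\|S_\delta u_0-u_0\|_{H^{s-1}}=o(\delta)$ (which hold because the high-frequency tail of the $H^s$ norm of $u_0$ vanishes), propagated along the flow. With that correction the argument closes.
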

Moreover, we have the following lower bound for the existence time $T$ and relationship between the $H^s$ norm of the solution $u$ at time $t$ and the $H^s$ norm of the initial data:
\begin{lemma}
\label{thm: existence time}
Let $s>\frac{3}{2}$. If $u$ is the solution to \eqref{eq: whitham} with initial data $u_0\in H^s$ described in Theorem \ref{thm: local well-posedness}, then there exists a constant $c_s$, depending only on $s$, such that
\begin{equation}
\label{eq: bound on solution norm}
\|u(t)\|_{H^s}\leq \frac{\|u_0\|_{H^s}}{1-t c_s\|u_0\|_{H^s}}.
\end{equation}
In particular, the maximal existence time $T$ in Theorem \ref{thm: local well-posedness} satisfies
\begin{equation*}
T\geq \frac{1}{c_s \|u_0\|_{H^s}}.
\end{equation*}
\end{lemma}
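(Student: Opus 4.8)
The goal is to establish Lemma \ref{thm: existence time}, namely the differential-inequality bound \eqref{eq: bound on solution norm} on $\|u(t)\|_{H^s}$ together with the resulting lower bound on the existence time $T$. The plan is to derive an a priori energy estimate of the form $\frac{d}{dt}\|u(t)\|_{H^s}^2 \leq c_s \|u(t)\|_{H^s}^3$ and then integrate the associated scalar ODE.

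First I would work formally on a smooth solution $u$ (the general case follows by the density/approximation arguments underlying Theorem \ref{thm: local well-posedness}) and compute, using the equation \eqref{eq: whitham}, that
\begin{equation*}
\frac{1}{2}\frac{d}{dt}\|u(t)\|_{H^s}^2 = -\langle \Lambda^s(u\partial_x u), \Lambda^s u\rangle_{L^2} - \langle \Lambda^s L(\partial_x u), \Lambda^s u\rangle_{L^2},
\end{equation*}
where $\Lambda^s = (1-\partial_x^2)^{s/2}$. The second term vanishes (or is harmless): since $m$ is even and real-valued, $L$ is self-adjoint and $\Lambda^s L \partial_x$ is skew-adjoint, so $\langle \Lambda^s L(\partial_x u), \Lambda^s u\rangle = 0$; this is precisely why no $p$-dependent constant enters the bound. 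For the transport term I would use the standard Kato--Ponce commutator estimate: writing $\langle \Lambda^s(u\partial_x u),\Lambda^s u\rangle = \langle [\Lambda^s, u]\partial_x u, \Lambda^s u\rangle + \langle u \Lambda^s\partial_x u, \Lambda^s u\rangle$, the commutator is controlled by $\|[\Lambda^s,u]\partial_x u\|_{L^2} \lesssim \|\partial_x u\|_{L^\infty}\|u\|_{H^s}$, and the second piece is integrated by parts to give $\frac{1}{2}\langle (\partial_x u)\Lambda^s u, \Lambda^s u\rangle \lesssim \|\partial_x u\|_{L^\infty}\|u\|_{H^s}^2$. Since $s > \frac{3}{2}$, the Sobolev embedding $H^{s-1}\hookrightarrow L^\infty$ gives $\|\partial_x u\|_{L^\infty}\lesssim \|u\|_{H^s}$, yielding $\frac{d}{dt}\|u(t)\|_{H^s}^2 \leq c_s\|u(t)\|_{H^s}^3$ for a constant $c_s$ depending only on $s$.

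With $y(t) := \|u(t)\|_{H^s}$, this reads $\frac{d}{dt}(y^2)\leq c_s y^3$, i.e. $2y\, y' \leq c_s y^3$, so $y' \leq \frac{c_s}{2} y^2$ wherever $y>0$ (and the bound is trivial where $y=0$). Separating variables, $-\frac{d}{dt}\big(y(t)^{-1}\big) \leq \frac{c_s}{2}$, hence $y(t)^{-1} \geq y(0)^{-1} - \frac{c_s}{2}t$, which rearranges to $y(t) \leq \frac{y(0)}{1 - \frac{c_s}{2} t\, y(0)}$; absorbing the factor $\tfrac12$ into $c_s$ gives exactly \eqref{eq: bound on solution norm}. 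This estimate is valid as long as the right-hand side is finite, i.e. for $t < \frac{1}{c_s\|u_0\|_{H^s}}$; a standard continuation argument (the $H^s$ norm stays bounded on this interval, so the solution from Theorem \ref{thm: local well-posedness} extends) then forces $T \geq \frac{1}{c_s\|u_0\|_{H^s}}$.

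The main obstacle is purely technical rather than conceptual: justifying the formal energy computation at the $H^s$ regularity level with $s>\frac{3}{2}$, which requires either mollifying the equation (applying $J_\varepsilon = (1-\varepsilon^2\partial_x^2)^{-1}$, estimating commutators $[J_\varepsilon, u]\partial_x$ uniformly in $\varepsilon$, and passing to the limit) or invoking that the solutions of Theorem \ref{thm: local well-posedness} are obtained as limits of smooth solutions for which the estimate holds and is stable under the limit. One must also check that the skew-adjointness argument for the $L(\partial_x u)$ term survives the mollification — it does, since $J_\varepsilon$ commutes with the Fourier multipliers $\Lambda^s$ and $L\partial_x$ — and that the Kato--Ponce constant genuinely depends only on $s$. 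Neither point presents real difficulty given the well-posedness framework already in place.
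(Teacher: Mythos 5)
Your proposal is correct and follows essentially the same route as the paper: the $L(\partial_x u)$ term is killed by the evenness of $m$, the transport term is handled by the Kato--Ponce commutator estimate (which is exactly what underlies the cited bound from \cite{Himonas2009nud}), the rigorous justification goes through mollification, and the resulting Riccati-type inequality is integrated to give \eqref{eq: bound on solution norm}. The only differences are cosmetic (you integrate $y'\leq \tfrac{c_s}{2}y^2$ for $y=\|u\|_{H^s}$ rather than $\tfrac12 y^{-3/2}y'\leq c_s$ for $y=\|u\|_{H^s}^2$, and you spell out the commutator decomposition that the paper delegates to a reference).
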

\begin{remark}
\label{rem: remark}
Lemma \ref{thm: existence time} is a typical result for equations of the form \eqref{eq: whitham} and can be proved by standard arguments, but we give a proof here for the sake of completeness. We prove Lemma \ref{thm: existence time} on the line. How to extend the proof to the periodic case should be clear. The proof follows the proof of Proposition 1 in \cite{Himonas2009nud}, an equivalent result for the CH equation, but is in fact simpler due to the the operator $L$ being skew-symmetric and linear.
\end{remark}
In order to prove Lemma \ref{thm: existence time}, we introduce the operators $\Lambda^s$ defined by
\begin{equation*}
\widehat{\Lambda^s f}(\xi)=(1+\xi^2)^{s/2}\widehat{f}(\xi), \quad s\in \R.
\end{equation*}
Note that $\|\Lambda^s f\|_{L^2(\R)}=\|f\|_{H^s(\R)}$.
\begin{proof}
The proof relies on the following differential inequality for the solution $u$ that we will establish:
\begin{equation}
\label{eq: differential ineq}
\frac{1}{2}\frac{\mathrm{d}}{\mathrm{d}t}\|u(t)\|_{H^s(\R)}^2\leq c_s \|u(t)\|_{H^s(\R)}^3.
\end{equation}
Solving \eqref{eq: whitham} for $\partial_t u$, we get
\begin{equation*}
\partial_t u=-u\partial_x u-L(\partial_x u).
\end{equation*}
In order to make all the terms be in $H^s(\R)$, we mollify, which we write as
\begin{equation*}
J_\varepsilon f=j_\varepsilon \ast f.
\end{equation*}
Thus we consider the equation
\begin{equation}
\label{eq: mollified eq.}
\partial_t J_\varepsilon u=-J_\varepsilon(u\partial_x u)-L(\partial_x J_\varepsilon u),
\end{equation}
where writing $L(\partial_x J_\varepsilon u)$ in the last term is justified as follows: Firstly, writing $L(u)$ as a convolution $\F^{-1}(m(\xi))\ast u$, associativity and commutativity of convolution gives that $J_\varepsilon$ and $L$ commutes,
\begin{align*}
J_\varepsilon L(\partial_x u) & =j_\varepsilon\ast (\F^{-1}(m(\xi))\ast \partial_x u)=\F^{-1}(m(\xi))\ast(j_\varepsilon \ast \partial_x u) \\
&=L(J_\varepsilon \partial_x u).
\end{align*}
Secondly, it can easily be shown that $J_\varepsilon \partial_x u=\partial_x J_\varepsilon u$ using integration by parts.

Applying the operator $\Lambda^s$ to both sides of \eqref{eq: mollified eq.}, then multiplying the resulting equation by $\Lambda^s(J_\varepsilon u)$ and integrating it for $x\in \R$ gives
\begin{equation}
\label{eq: differential norm equation}
\frac{1}{2}\frac{\mathrm{d}}{\mathrm{d}t}\|J_\varepsilon u(t)\|_{H^s(\R)}^2=-\int_{\R} \Lambda^s(J_\varepsilon(u \partial_x u))\Lambda^s(J_\varepsilon u)\dx-\int_{\R}\Lambda^s\left( L(\partial_x J_{\varepsilon}u)\right) \Lambda^s(J_\varepsilon u)\dx.
\end{equation}
First we consider the last term on the right hand side:
\begin{align*}
\int_{\R}\Lambda^s\left( L(\partial_x J_{\varepsilon}u)\right) \Lambda^s(J_\varepsilon u)\dx &= \int_{\R}(1+\xi^2)^{s/2}\overline{\widehat{J_\varepsilon u}}(\xi)(1+\xi^2)^{s/2} m(\xi) \widehat{\partial_x J_\varepsilon u}(\xi)\dxi \\
& = \int_{\R} (1+\xi^2)^s m(\xi) i\xi \widehat{J_\varepsilon u}(-\xi)\widehat{J_\varepsilon u}(\xi)\dxi \\
&=0,
\end{align*}
where the last inequality follows from $m$ being even. For the first term on the right hand side of \eqref{eq: differential norm equation}, we know from the proof of Proposition 1 in \cite{Himonas2009nud} that
\begin{equation*}
|\int_{\R} \Lambda^s(J_\varepsilon(u \partial_x u))\Lambda^s(J_\varepsilon u)\dx|\leq c_s\|\partial_x u\|_{\infty}\|u\|_{H^s(\R)}^2
\end{equation*}
(the proof relies on commutator estimates for the operators $\Lambda^s$).
Thus we have that
\begin{equation*}
\frac{1}{2}\frac{\mathrm{d}}{\mathrm{d}t}\|J_\varepsilon u(t)\|_{H^s(\R)}^2\leq c_s \|\partial_x u\|_{L^\infty}\|u\|_{H^s(\R)}^2.
\end{equation*}
Integrating from $0$ to $t$ on both sides, we get
\begin{equation*}
\frac{1}{2}\|J_\varepsilon u(t)\|_{H^s(\R)}^2-\frac{1}{2}\|J_\varepsilon u(0)\|_{H^s(\R)}^2\leq c_s \int_0^t \|\partial_x u(\tau)\|_{\infty}\|u(\tau)\|_{H^s(\R)}^2 \dt,
\end{equation*}
and letting $\varepsilon\rightarrow 0$, we have that
\begin{equation*}
\frac{1}{2}\|u(t)\|_{H^s(\R)}^2-\frac{1}{2}\|u(0)\|_{H^s(\R)}^2\leq c_s \int_0^t \|\partial_x u(\tau)\|_{L^\infty}\|u(\tau)\|_{H^s(\R)}^2 \,\mathrm{d}\tau.
\end{equation*}
From this we deduce that
\begin{equation}
\label{eq: basis for gronwall}
\frac{1}{2}\frac{\mathrm{d}}{\mathrm{d}t}\|u(t)\|_{H^s(\R)}^2\leq c_s \|\partial_x u(t)\|_{L^\infty}\|u(t)\|_{H^s(\R)}^2.
\end{equation}
Since $s>\frac{3}{2}$, the Sobolev embedding $H^{s-1}(\R)\hookrightarrow L^\infty(\R)$ holds and we thus get \eqref{eq: differential ineq}. Now let $y(t)=\|u(t)\|_{H^s(\R)}^2$. Then \eqref{eq: differential ineq} implies
\begin{equation*}
\frac{1}{2} y^{-3/2}\frac{\mathrm{d}y}{\mathrm{d}t}\leq c_s.
\end{equation*}
Integrating from $0$ to $t$ gives
\begin{equation*}
\frac{1}{\sqrt{y(0)}}-\frac{1}{\sqrt{y(t)}}\leq c_s t,
\end{equation*}
and we obtain \eqref{eq: bound on solution norm}. From \eqref{eq: bound on solution norm} we immediately get that $\|u(t)\|_{H^s(\R)}$ is finite when $t<\frac{1}{c_s \|u_0\|_{H^s(\R)}}$, and thus we get the lower bound on the maximal existence time $T$.
\end{proof}

We also have energy estimates for arbitrary Sobolev norm. The statements below are rather rough, as we are not interested in optimizing the constants for which the inequalities are true.
\begin{corollary}
\label{cor: energy estimate}
Let $s>\frac{3}{2}$. Given $u_0\in H^s$, let $u$ be the corresponding solution. Then, for any $T_0<(c_s\|u_0\|_{H^s})^{-1}$ and all $t\in [0,T_0]$, one has
\begin{equation}
\|u(t)\|_{H^r}\lesssim \exp(C t\|u_0\|_{H^s})\|u_0\|_{H^r},
\end{equation}
for all $r>0$, for some constant $C$ depending only on $r$ and distance between $(c_s\|u_0\|_{H^s})^{-1}$ and $T_0$.
\end{corollary}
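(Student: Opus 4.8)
The plan is to redo the differential-inequality argument from the proof of Lemma \ref{thm: existence time}, but now estimating the $H^r$-norm of the solution for an arbitrary $r > 0$ rather than the $H^s$-norm, and crucially controlling the resulting inequality by the $H^s$-norm (which we already know stays bounded on $[0,T_0]$) rather than by the $H^r$-norm itself. Concretely, I would again apply $\Lambda^r$ to the mollified equation \eqref{eq: mollified eq.}, pair with $\Lambda^r(J_\varepsilon u)$ and integrate. The dispersive term $\int_\R \Lambda^r(L(\partial_x J_\varepsilon u))\Lambda^r(J_\varepsilon u)\dx$ vanishes exactly as before, by the evenness of $m$ and the computation already carried out — this step is identical with $s$ replaced by $r$. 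For the nonlinear term, I would invoke the same commutator/Kato–Ponce type estimate used in the proof of Lemma \ref{thm: existence time} (from Proposition 1 of \cite{Himonas2009nud}), which gives
\begin{equation*}
\left| \int_{\R} \Lambda^r(J_\varepsilon(u \partial_x u))\Lambda^r(J_\varepsilon u)\dx \right| \leq c_r \|\partial_x u\|_{L^\infty} \|u\|_{H^r(\R)}^2 .
\end{equation*}

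Combining these two facts, letting $\varepsilon \to 0$, and using the Sobolev embedding $H^{s-1}(\R) \hookrightarrow L^\infty(\R)$ (valid since $s > \tfrac32$) to bound $\|\partial_x u(t)\|_{L^\infty} \lesssim \|u(t)\|_{H^s}$, I obtain the differential inequality
\begin{equation*}
\frac{1}{2}\frac{\mathrm{d}}{\mathrm{d}t}\|u(t)\|_{H^r(\R)}^2 \leq C_r \|u(t)\|_{H^s(\R)} \|u(t)\|_{H^r(\R)}^2 .
\end{equation*}
Now I would feed in the a priori bound from Lemma \ref{thm: existence time}: for $t \in [0,T_0]$ with $T_0 < (c_s\|u_0\|_{H^s})^{-1}$ we have $\|u(t)\|_{H^s(\R)} \leq \frac{\|u_0\|_{H^s}}{1 - T_0 c_s \|u_0\|_{H^s}} =: M$, where $M$ depends on $\|u_0\|_{H^s}$ and on the distance between $T_0$ and $(c_s\|u_0\|_{H^s})^{-1}$. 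With $y(t) = \|u(t)\|_{H^r}^2$ this reads $y'(t) \leq 2C_r M\, y(t)$, so Grönwall's inequality gives $y(t) \leq e^{2C_r M t} y(0)$, i.e.
\begin{equation*}
\|u(t)\|_{H^r(\R)} \leq e^{C_r M t} \|u_0\|_{H^r(\R)} .
\end{equation*}
Since $M \lesssim \|u_0\|_{H^s}$ with the implied constant depending only on the gap between $T_0$ and $(c_s\|u_0\|_{H^s})^{-1}$, absorbing constants into a single $C = C(r, \operatorname{dist})$ yields $\|u(t)\|_{H^r} \lesssim \exp(C t \|u_0\|_{H^s}) \|u_0\|_{H^r}$, which is the claim. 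The extension to the torus is as remarked after Lemma \ref{thm: existence time}.

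The only genuinely delicate point is the justification that the commutator estimate for $\Lambda^r$ holds for all $r > 0$ (not merely for $r > \tfrac32$), and that the mollification/limit argument goes through at this regularity — but since the right-hand side only ever needs $\|\partial_x u\|_{L^\infty}$, which is finite because $u(t) \in H^s$ with $s > \tfrac32$, there is no circularity and no lower restriction on $r$ is needed; the estimate is standard. I do not expect any real obstacle here: the corollary is essentially a bookkeeping variant of Lemma \ref{thm: existence time} in which one decouples the "energy" index $r$ from the "regularity threshold" index $s$ that controls $\|\partial_x u\|_{L^\infty}$.
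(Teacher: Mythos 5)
Your proposal is correct and follows essentially the same route as the paper: both re-run the energy estimate of Lemma \ref{thm: existence time} with $\Lambda^r$ in place of $\Lambda^s$, bound $\|\partial_x u\|_{L^\infty}$ by $\|u(t)\|_{H^s}$ via Sobolev embedding, control the latter on $[0,T_0]$ using \eqref{eq: bound on solution norm}, and close with Gr\"onwall. The only cosmetic difference is that you substitute the uniform bound $M$ before applying Gr\"onwall while the paper keeps the integral $\int_0^t\|u(\tau)\|_{H^s}\,\mathrm{d}\tau$ and bounds it afterwards.
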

\begin{proof}
Note that in the arguments establishing \eqref{eq: basis for gronwall} in the proof of Lemma \ref{thm: existence time}, it was nowhere used that $s$ in the order of the Sobolev norm was the same $s$ as in the statement of the Lemma, so \eqref{eq: basis for gronwall} holds for any $r>0$ in place of $s$. Thus
\begin{equation*}
\frac{\mathrm{d}}{\mathrm{d}t}\|u(t)\|_{H^r}\leq c_r \|\partial_x u(t)\|_{L^\infty}\|u(t)\|_{H^r}
\end{equation*}
for any $r>0$. From Gr\"onwall's inequality, Sobolev embeddings and \eqref{eq: bound on solution norm} we then get that
\begin{align*}
\|u(t)\|_{H^r} & \leq \exp(c_r \int_0^t \|\partial_x u(\tau)\|_{L^\infty}\, \mathrm{d}\tau)\|u_0\|_{H^r} \\
& \leq C_1 \exp(c_r \int_0^t\|u(\tau)\|_{H^s}\, \mathrm{d}\tau)\|u_0\|_{H^r} \\
& \leq C_1\exp(c_r \int_0^t C_2\|u_0\|_{H^s}\, \mathrm{d}\tau)\|u_0\|_{H^r} \\
& = C_1\exp(c_r C_2 t\|u_0\|_{H^s})\|u_0\|_{H^r}
\end{align*}
for all $r>0$ and $t\in [0, T_0]$, where $C_1$ is an embedding constant and $C_2>0$ depends only on the difference $(c_s\|u_0\|_{H^s})^{-1}-T_0$.

\end{proof}
\section{The periodic case}
\label{sec: periodic}
This section is devoted to proving lack of uniform continuity for the flow map of equation \eqref{eq: whitham} on $\T$. That is, we will prove Theorem \ref{thm: periodic}. This will be done in two steps. First, we construct two sequences of approximate solutions in $H^s(\T)$ that converge to the same limit at time $0$, while remaining bounded apart at any later time. Then we show that the approximate solutions are sufficiently close to real solutions, thereby establishing lack of uniform continuity. The proof is based on \cite{Koch2005nwi} and \cite{Molinet2007gwp}.

The approximate solutions consist of a low-frequency term and a high-frequency term and are constructed as follows: For $\omega\in \R$ and $n\in \N$, we set
\begin{equation*}
u_n^\omega (x,t)=\omega n^{-1}+n^{-s}\cos(-n m(n) t+nx-\omega t).
\end{equation*}
By direct calculation, one can show that for $n\in \N$ and $\alpha\in\R$,
\begin{equation}
\label{eq: sin(n) norm}
\|\sin(nx-\alpha)\|_{H^\sigma(\T)}\simeq n^\sigma,
\end{equation}
and similarly for cosine as well. Thus, for $\omega$ bounded, we have
\begin{equation*}
\|u_n^\omega (\cdot,t)\|_{H^s(\T)}\simeq 1, \,\, \text{for all} \,\, t\in \R, \, n\in N.
\end{equation*}
In particular, $u_n^\omega\in H^s(\T)$ for all $n\in \N$ and all $t\in \R$ and the $H^s(\T)$ norm is bounded above uniformly in $n\in \N$.

The next lemma measures how far away the functions $u_n^\omega$ are from solving equation \eqref{eq: whitham} in the spaces $H^\sigma(\T)$:
\begin{lemma}
\label{lem: error estimate}
Set
\begin{equation}
\label{eq: expression for E}
E =\partial_t u_n^\omega+u_n^\omega \partial_x u_n^\omega+L(\partial_x u_n^\omega),
\end{equation}
the error of $u_n^\omega$ as an approximate solution to \eqref{eq: whitham}. Then, for $\sigma\in \R$, the error $E$ satisfies
\begin{equation*}
\|E\|_{H^\sigma(\T)}\lesssim n^{-2s+1+\sigma}.
\end{equation*}
\end{lemma}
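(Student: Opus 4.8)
The plan is to compute $E$ explicitly by plugging $u_n^\omega(x,t) = \omega n^{-1} + n^{-s}\cos\theta$ with phase $\theta = -n m(n) t + n x - \omega t$ into the expression \eqref{eq: expression for E}, and then to identify which terms survive and estimate each. Writing $\phi = n^{-s}\cos\theta$ for the high-frequency part, we have $\partial_t \phi = n^{-s}(n m(n) + \omega)\sin\theta$ and $\partial_x \phi = -n^{-s} n \sin\theta = -n^{1-s}\sin\theta$, while $L(\partial_x \phi) = -n^{1-s} m(n)\sin\theta$ since the Fourier multiplier acts on the single frequency $\pm n$ (using that $m$ is even so that the contributions from $e^{i\theta}$ and $e^{-i\theta}$ combine correctly). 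The constant term $\omega n^{-1}$ is killed by $\partial_t$, by $\partial_x$, and by $L\partial_x$. So in $E$ the genuinely linear-in-$\phi$ contributions are
\begin{equation*}
\partial_t\phi + L(\partial_x\phi) = n^{-s}(n m(n) + \omega)\sin\theta - n^{1-s} m(n)\sin\theta = \omega n^{-s}\sin\theta,
\end{equation*}
which is exactly the term designed to be cancelled — up to the $\omega n^{-s}\sin\theta$ remainder — by the transport term. Indeed $u_n^\omega \partial_x u_n^\omega = (\omega n^{-1})\partial_x\phi + \phi\,\partial_x\phi = -\omega n^{-s}\sin\theta + \phi\,\partial_x\phi$, and the first piece cancels the leftover above. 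Hence
\begin{equation*}
E = \phi\,\partial_x\phi = n^{-s}\cos\theta\cdot(-n^{1-s}\sin\theta) = -\tfrac{1}{2}n^{1-2s}\sin(2\theta).
\end{equation*}

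Once $E$ is reduced to this single term, the estimate is immediate: $\sin(2\theta)$ is $\sin(2nx - \text{phase})$, so by \eqref{eq: sin(n) norm} (applied with frequency $2n$, which is comparable to $n$) we get $\|\sin(2\theta)\|_{H^\sigma(\T)} \simeq n^\sigma$, and therefore
\begin{equation*}
\|E\|_{H^\sigma(\T)} = \tfrac{1}{2}n^{1-2s}\|\sin(2\theta)\|_{H^\sigma(\T)} \lesssim n^{-2s+1+\sigma},
\end{equation*}
which is the claimed bound. I would present the computation cleanly by separating $u_n^\omega$ into its low- and high-frequency parts from the outset and tracking the three operators $\partial_t$, $\partial_x$, $L\partial_x$ on each part in a short table or displayed list of identities.

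The only real point requiring care — and the place I expect to spend the most attention — is verifying that $L(\partial_x \cos\theta) = -n m(n)\sin\theta$, i.e. that the Fourier multiplier does what one expects on $\cos\theta = \tfrac{1}{2}(e^{i\theta} + e^{-i\theta})$ where $\theta$ has spatial frequency $\pm n$. This is where evenness of $m$ enters: $\widehat{L f}(\xi) = m(\xi)\widehat f(\xi)$ gives factors $m(n)$ and $m(-n) = m(n)$ on the two exponentials, so they recombine into a genuine $\cos$/$\sin$ with a real coefficient $m(n)$; without evenness one would pick up a spurious imaginary or $\cos$-versus-$\sin$ mismatch. One also needs $m$ to be defined (and finite) at the integer $n$, which holds since $m \in L^\infty_{loc}$ and, for $n$ large, $m$ is continuous there; this is harmless. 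Everything else is bookkeeping, and the $\omega$-dependent constants drop out exactly because the dispersion relation in the phase $\theta$ was chosen as $-n m(n) - \omega$ precisely to engineer the cancellation.
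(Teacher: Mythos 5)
Your proposal is correct and follows essentially the same route as the paper: direct computation of $\partial_t u_n^\omega$, $\partial_x u_n^\omega$ and $L(\partial_x u_n^\omega)$, cancellation of all terms except $\phi\,\partial_x\phi=-\tfrac{1}{2}n^{1-2s}\sin(2\theta)$, and then the norm estimate via \eqref{eq: sin(n) norm}. The extra care you devote to the action of $L$ on $\cos\theta$ and the role of evenness of $m$ is consistent with (and slightly more explicit than) the paper's presentation.
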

\begin{proof}
By straightforward calculations, we find
\begin{align*}
\partial_t u_n^\omega (x,t) &=n^{-s}\left(nm(n)+\omega\right)\sin(-nm(n)t+nx-\omega t), \\
\partial_x u_n^\omega (x,t) &=-n^{-s+1}\sin(-nm(n)t+nx-\omega t), \\
L(\partial_x u_n^\omega (x,t)) &=-n^{-s+1}m(n)\sin(-nm(n)t+nx-\omega t).
\end{align*}
Inserting $u_n^\omega (x,t)$ into \eqref{eq: whitham} and using the above equalities, we get the following expression for the error:
\begin{align*}
E &=\partial_t u_n^\omega+u_n^\omega \partial_x u_n^\omega+L(\partial_x u_n^\omega) \\
&=-n^{-2s+1}\sin(-nm(n)t+nx-\omega t)\cos(-nm(n)t+nx-\omega t) \\
&=-\frac{1}{2}n^{-2s+1} \sin[2(-nm(n)t+nx-\omega t)].
\end{align*}
The statement now follows from \eqref{eq: sin(n) norm}.
\end{proof}

\begin{lemma}
\label{lem: distance at time t}
For $n\gg 1$ and $t\geq 0$,
\begin{equation*}
\|u_n^{1}(\cdot, t)-u_n^{-1}(\cdot,t)\|_{H^s(\T)}\gtrsim \sin(t).
\end{equation*}
Moreover
\begin{equation*}
\|u_n^{1}(\cdot, 0)-u_n^{-1}(\cdot,0)\|_{H^s(\T)}\rightarrow 0, \,\, \text{as} \,\, n\rightarrow \infty.
\end{equation*}
\end{lemma}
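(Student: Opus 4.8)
The plan is to compute the difference $u_n^{1}-u_n^{-1}$ in closed form and then read off its $H^s(\T)$-norm directly, using the Fourier-norm estimate \eqref{eq: sin(n) norm} and the fact that the low-frequency part (a constant, Fourier mode $0$) and the high-frequency part (supported on the Fourier modes $\pm n$) occupy disjoint sets of frequencies, so that no cancellation occurs in the $H^s$-norm.

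First I would write $u_n^{\pm 1}(x,t)=\pm n^{-1}+n^{-s}\cos(\theta_{\pm})$ with phases $\theta_{\pm}=-nm(n)t+nx\mp t$, and apply the identity $\cos\theta_+-\cos\theta_-=-2\sin\frac{\theta_++\theta_-}{2}\sin\frac{\theta_+-\theta_-}{2}$. Since $\frac{\theta_++\theta_-}{2}=nx-nm(n)t$ and $\frac{\theta_+-\theta_-}{2}=-t$, this yields
\begin{equation*}
u_n^{1}(x,t)-u_n^{-1}(x,t)=\frac{2}{n}+\frac{2\sin t}{n^{s}}\sin\!\big(nx-nm(n)t\big).
\end{equation*}
Note that for $n$ large the number $m(n)$ is unambiguous since $m$ is continuous for $|\xi|>N$ (and in any case $m\in L^\infty_{loc}$ makes $m(n)$ a well-defined real number), so the phase $nm(n)t$ is just a real constant for fixed $n$ and $t$.

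Next, since the two summands lie on disjoint sets of Fourier modes,
\begin{equation*}
\|u_n^{1}(\cdot,t)-u_n^{-1}(\cdot,t)\|_{H^s(\T)}^2=\tfrac{4}{n^2}\|1\|_{H^s(\T)}^2+\tfrac{4\sin^2 t}{n^{2s}}\,\big\|\sin\!\big(nx-nm(n)t\big)\big\|_{H^s(\T)}^2 .
\end{equation*}
By \eqref{eq: sin(n) norm} the second term is $\simeq \sin^2 t$ with implied constants independent of $n$ (and of $t$), hence $\|u_n^{1}(\cdot,t)-u_n^{-1}(\cdot,t)\|_{H^s(\T)}\gtrsim |\sin t|\geq \sin t$, which is the first claim (the hypothesis $n\gg1$ is used only if one prefers to argue via the reverse triangle inequality, since $2n^{-1}\|1\|_{H^s(\T)}\to0$). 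For the second claim, the oscillatory term vanishes at $t=0$, so $u_n^1(\cdot,0)-u_n^{-1}(\cdot,0)=2/n$ and therefore $\|u_n^1(\cdot,0)-u_n^{-1}(\cdot,0)\|_{H^s(\T)}=2n^{-1}\|1\|_{H^s(\T)}\to0$ as $n\to\infty$.

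Since the argument is just an explicit trigonometric identity followed by an application of \eqref{eq: sin(n) norm}, there is no genuine obstacle; the only points demanding a little care are getting the phase arithmetic right in the product-to-sum identity and observing the frequency separation of the two terms so that the high-frequency contribution is not dampened in the $H^s$-norm.
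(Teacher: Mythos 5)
Your proof is correct and follows essentially the same route as the paper: the product-to-sum identity isolates the factor $\sin(t)$, and \eqref{eq: sin(n) norm} gives the size of the high-frequency term. The only (harmless) difference is at the last step, where you use Fourier orthogonality of the mode-$0$ and mode-$\pm n$ parts instead of the reverse triangle inequality, which in fact yields the marginally cleaner bound $\gtrsim|\sin t|$ without the paper's residual $-1/n$ term.
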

\begin{proof}
Using the basic trigonometric identity $\cos(\alpha\pm \beta)=\cos(\alpha)\cos(\beta)\mp \sin(\alpha)\sin(\beta)$ with $\alpha=-nm(n)t+nx$ and $\beta=t$, and \eqref{eq: sin(n) norm} we get
\begin{align*}
\| & u_n^{1}(\cdot, t)  -u_n^{-1}(\cdot,t)\|_{H^s(\T)} \\
&= \|2n^{-1}+n^{-s}\left[\cos(-nm(n)t+n\cdot-t)-\cos(-nm(n)t+n\cdot+ t)\right]\|_{H^s(\T)} \\
&= \|2n^{-1}+2n^{-s}\sin(-nm(n)t+n\cdot)\sin(t)\|_{H^s(\T)} \\
& \gtrsim 2n^{-s}|\sin(t)|\|\sin(-nm(n)t+n\cdot)\|_{H^s(\T)} -\frac{2}{n} \\
& \simeq |\sin(t)|-\frac{1}{n}.
\end{align*}
This proves the first statement. Setting $t=0$ in the calculations above, it is plain to see that the second statement also holds.
\end{proof}

Now we show that the approximate solutions $u_n^{\omega}$ are sufficiently close to real solutions $v_n^{\omega}$ of \eqref{eq: whitham} for $n\gg 1$.
\begin{lemma}
\label{lem: approx exact difference s<2}
Let 
$v_n^{\omega}(x,t)$ be the $H^s(\T)$ solution to the Cauchy problem
\begin{align*}
& \partial_t v_n^{\omega}+v_n^{\omega}\partial_x v_n^{\omega}+L(\partial_x v_n^{\omega})=0, \\
& v_n^{\omega}(x,0)=\omega n^{-1}+n^{-s}\cos(nx).
\end{align*}
That is, $v_n^{\omega}$ is a solution to equation \eqref{eq: whitham} with initial data given by $u_n^{\omega}$ evaluated at time $t=0$. Then the following holds:
\begin{itemize}
\item[(i)] If $s>\frac{3}{2}$, there exists $T_0>0$ independent of $n$ such that for any $k>s$,
\begin{equation*}
\|u_n^{\omega}(t)-v_n^{\omega}(t)\|_{H^s(\T)} \lesssim n^{(1-s)(1-\frac{s}{k})}, \,\, 0\leq t\leq T_0 , \,\, n\gg 1.
\end{equation*}
\item[(ii)] If $0<s\leq \frac{3}{2}$, there exists $0\leq T_n\lesssim n^{s-\sigma}$, where $\sigma>\frac{3}{2}$ can be arbitrarily close to $\frac{3}{2}$, such that
\begin{equation*}
\|u_n^{\omega}(t)-v_n^{\omega}(t)\|_{H^s(\T)}\lesssim n^{-(1/2)(1-s/k)}
\end{equation*}
for any $k>\frac{3}{2}$ and $0\leq t\leq T_n$.
\end{itemize}

\end{lemma}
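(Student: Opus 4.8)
I would prove this by the comparison-and-interpolation strategy of \cite{Koch2005nwi} and \cite{Molinet2007gwp}. The initial datum $\omega n^{-1}+n^{-s}\cos(nx)$ is smooth, so by Theorem \ref{thm: local well-posedness} the solution $v_n^\omega$ belongs to $C([0,T);H^\sigma(\T))$ for every $\sigma>\frac{3}{2}$ (in case (i) with $\sigma=s$), on an interval of length at least $\bigl(c_\sigma\|v_n^\omega(0)\|_{H^\sigma(\T)}\bigr)^{-1}$ by Lemma \ref{thm: existence time}. Put $w:=u_n^\omega-v_n^\omega$, so $w(0)=0$; subtracting the two equations and using $u_n^\omega\partial_x u_n^\omega-v_n^\omega\partial_x v_n^\omega=\frac12\partial_x\bigl(w(u_n^\omega+v_n^\omega)\bigr)$ gives
\begin{equation*}
\partial_t w+\frac12(u_n^\omega+v_n^\omega)\partial_x w+\frac12 w\,\partial_x(u_n^\omega+v_n^\omega)+L(\partial_x w)=E,
\end{equation*}
with $E$ the error of Lemma \ref{lem: error estimate}. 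The plan is: (a) a sharp $L^2(\T)$ estimate for $w$; (b) a crude $H^k(\T)$ estimate for $w$; (c) interpolation between the two.

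For (a), I would pair the difference equation with $w$ in $L^2(\T)$. The dispersive term drops out because $m$ is even (this is the computation from the proof of Lemma \ref{thm: existence time}), the transport terms integrate by parts to $-\frac14\int_\T\partial_x(u_n^\omega+v_n^\omega)\,w^2\,\dx$, and one obtains
\begin{equation*}
\frac12\frac{\mathrm{d}}{\mathrm{d}t}\|w\|_{L^2(\T)}^2\lesssim\bigl(\|\partial_x u_n^\omega\|_{L^\infty}+\|\partial_x v_n^\omega\|_{L^\infty}\bigr)\|w\|_{L^2(\T)}^2+\|E\|_{L^2(\T)}\|w\|_{L^2(\T)}
\end{equation*}
(made rigorous by mollifying as before). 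By inspection $\|\partial_x u_n^\omega\|_{L^\infty}\simeq n^{1-s}$, and $\|E\|_{L^2(\T)}\lesssim n^{1-2s}$ by Lemma \ref{lem: error estimate}; the factor $\|\partial_x v_n^\omega\|_{L^\infty}$ I would control by Sobolev embedding combined with Lemma \ref{thm: existence time}. In case (i), $s>\frac{3}{2}$ and $\|v_n^\omega(0)\|_{H^s(\T)}\simeq1$, so there is an $n$-independent $T_0>0$ on which $\|v_n^\omega(t)\|_{H^s(\T)}\lesssim1$ and hence $\|\partial_x v_n^\omega(t)\|_{L^\infty}\lesssim1$. In case (ii), I would fix $\sigma>\frac{3}{2}$ near $\frac{3}{2}$, observe $\|v_n^\omega(0)\|_{H^\sigma(\T)}\simeq n^{\sigma-s}$, and set $T_n:=\frac12\bigl(c_\sigma\|v_n^\omega(0)\|_{H^\sigma(\T)}\bigr)^{-1}\simeq n^{s-\sigma}$; then $\|v_n^\omega(t)\|_{H^\sigma(\T)}\le2\|v_n^\omega(0)\|_{H^\sigma(\T)}$ on $[0,T_n]$, whence $\|\partial_x v_n^\omega(t)\|_{L^\infty}\lesssim n^{\sigma-s}$. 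Grönwall with $w(0)=0$ then yields $\|w(t)\|_{L^2(\T)}\lesssim n^{1-2s}$ on $[0,T_0]$ in case (i) (the exponential is $\bigO(1)$, since $n^{1-s}\to0$), and $\|w(t)\|_{L^2(\T)}\lesssim T_n\,n^{1-2s}\simeq n^{1-s-\sigma}$ on $[0,T_n]$ in case (ii) (again $\bigO(1)$ exponential, because $T_n n^{\sigma-s}\simeq1$ and $T_n n^{1-s}\simeq n^{1-\sigma}\to0$).

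For (b), the triangle inequality $\|w(t)\|_{H^k(\T)}\le\|u_n^\omega(t)\|_{H^k(\T)}+\|v_n^\omega(t)\|_{H^k(\T)}$ suffices: $\|u_n^\omega(t)\|_{H^k(\T)}\simeq n^{k-s}$ by \eqref{eq: sin(n) norm}, while $\|v_n^\omega(t)\|_{H^k(\T)}$ is estimated by Corollary \ref{cor: energy estimate} applied to $v_n^\omega$ --- with controlling norm $H^s$ on $[0,T_0]$ in case (i) and controlling norm $H^\sigma$ on $[0,T_n]$ in case (ii). In both cases $\|v_n^\omega(0)\|_{H^k(\T)}\simeq n^{k-s}$ and the exponential in Corollary \ref{cor: energy estimate} is $\bigO(1)$; in case (ii) this is precisely because $T_n$ has been chosen to be a fixed fraction of the existence time from Lemma \ref{thm: existence time}, so the implied constants (cf. the proof of Corollary \ref{cor: energy estimate}) are independent of $n$. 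Hence $\|w(t)\|_{H^k(\T)}\lesssim n^{k-s}$. Finally, interpolation $\|w(t)\|_{H^s(\T)}\le\|w(t)\|_{L^2(\T)}^{1-s/k}\|w(t)\|_{H^k(\T)}^{s/k}$ gives, in case (i), $\lesssim(n^{1-2s})^{1-s/k}(n^{k-s})^{s/k}=n^{(1-s)(1-s/k)}$, and in case (ii), $\lesssim(n^{1-s-\sigma})^{1-s/k}(n^{k-s})^{s/k}=n^{(1-\sigma)(1-s/k)}\le n^{-\frac12(1-s/k)}$ since $\sigma>\frac{3}{2}$ and $1-s/k>0$; these are the asserted bounds.

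The routine parts are the (commutator-free) $L^2$ energy inequality and the arithmetic of the interpolation exponents. The real difficulty is case (ii): lacking $H^s$ well-posedness, one must carry out the whole argument on the short, $n$-dependent window $[0,T_n]$, and the crux is to check that \emph{every} Grönwall exponential, and the constant in Corollary \ref{cor: energy estimate}, stay bounded uniformly in $n$. This is what dictates choosing $T_n$ as a fixed proportion of $\bigl(c_\sigma\|v_n^\omega(0)\|_{H^\sigma(\T)}\bigr)^{-1}$, and is simultaneously the source of the restriction $t\le T_n\lesssim n^{s-\sigma}$ and of the weaker exponent compared to case (i). (Note that one cannot replace Corollary \ref{cor: energy estimate} by Lemma \ref{thm: existence time} at the $H^k$ level, since the latter controls $\|v_n^\omega\|_{H^k}$ only on a vanishingly short interval.)
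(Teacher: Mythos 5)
Your proposal is correct and follows essentially the same route as the paper: an $L^2$ energy estimate for $w=u_n^\omega-v_n^\omega$ using evenness of $m$ and Lemma \ref{lem: error estimate}, a crude $H^k$ bound via Corollary \ref{cor: energy estimate}, and interpolation, with the $n$-dependent window $T_n\simeq n^{s-\sigma}$ in case (ii). The only cosmetic difference is that you symmetrize the nonlinear difference as $\frac12\partial_x\bigl(w(u_n^\omega+v_n^\omega)\bigr)$, which forces you to also control $\|\partial_x v_n^\omega\|_{L^\infty}$ (you do so correctly), whereas the paper writes it as $w\partial_x w-\partial_x(wu_n^\omega)$ so that only $\|\partial_x u_n^\omega\|_{L^\infty}$ enters.
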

\begin{proof}
We prove (i) first; that is, we assume $s>\frac{3}{2}$ so that the Cauchy problem is locally well-posed in $H^s(\T)$. As $\|u_n^{\omega}\|_{H^s(\T)}\simeq 1$ for all $n\in \N$, Theorem \ref{thm: local well-posedness} and Lemma \ref{thm: existence time} guarantees the existence of $v_n^{\omega}\in H^s(\T)$ up to some time $T\simeq 1$ that can be considered independent of $n$. Letting $T_0$ be strictly smaller than the $T$ given by Lemma \ref{thm: existence time}, for instance $T_0=\frac{1}{2}T$, we have that
\begin{equation*}
\|v_n^{\omega}(t)\|_{H^s(\T)}\lesssim \|u_n^{\omega}\|_{H^s(\T)}
\end{equation*}
for all $0\leq t\leq T_0$.

Set $w=u_n^{\omega}-v_n^{\omega}$. Straightforward calculations, using the expression \eqref{eq: expression for E} for $E$ and that $v_n^{\omega}$ is an exact solution to \eqref{eq: whitham}, show that $w$ solves the initial value problem
\begin{align}
& \partial_t w=E+w\partial_x w-\partial_x (w u_n^\omega)-L(\partial_x w) \label{eq: equation for w} \\ & w(\cdot,0)=0. \nonumber
\end{align}
Multiplying by $w$ on both sides of \eqref{eq: equation for w}, we see that
\begin{align*}
\frac{1}{2}\frac{\mathrm{d}}{\mathrm{d}t}\|w(t)\|_{L^2(\T)}^2 = &\int_{\T}w E\dx \\
& +\int_{\T}w^2 \partial_x w\dx \\
& -\int_\T w\partial_x (w u_n^\omega) \dx \\
& -\int_{\T}wL(\partial_x w)\dx.
\end{align*}
Using Parseval's identity and that $m(\xi)$ is even, we see that the last integral vanishes.
The first term on the right-hand side is easily estimated by H\"older's inequality:
\begin{equation*}
\left| \int_{\T}w E\dx\right|\leq \|E\|_{L^2(\T)}\|w\|_{L^2(\T)}.
\end{equation*}
The second term is easily seen to vanish by writing $w^2\partial_x w=\partial_x (w^3)$. For the third term we use integration by parts and H\"older's inequality:
\begin{equation*}
\left| \int_\T w\partial_x (w u_n^\omega) \dx\right| = \frac{1}{2}\left| \int_\T u_n^\omega \partial_x (w^2) \dx\right| =\frac{1}{2}\left| \int_\T  w^2\partial_x u_n^\omega\dx\right|\lesssim \|\partial_x u_n^\omega\|_{L^\infty} \|w\|_{L^2(\T)}^2.
\end{equation*}
Combining these estimates we get the following inequality:
\begin{equation*}
\frac{1}{2}\frac{\mathrm{d}}{\mathrm{d}t}\|w(t)\|_{L^2(\T)}^2 \lesssim \|E\|_{L^2(\T)}\|w\|_{L^2(\T)}+\|\partial_x u_n^\omega\|_{L^\infty}\|w\|_{L^2(\T)}^2.
\end{equation*}
From the definition of $u_n^\omega (x,t)$ it follows that $\|\partial_x u_n^{\omega}(t)\|_{L^\infty}\lesssim n^{-s+1}$, and using Lemma \ref{lem: error estimate} we then conclude that
\begin{equation*}
\frac{1}{2}\frac{\mathrm{d}}{\mathrm{d}t}\|w(t)\|_{L^2(\T)}^2 \lesssim n^{-2s+1}\|w\|_{L^2(\T)}+n^{-s+1}\|w\|_{L^2(\T)}^2,
\end{equation*}
which implies that
\begin{equation}
\label{eq: diff inequality}
\frac{\mathrm{d}}{\mathrm{d}t}\|w(t)\|_{L^2(\T)}\lesssim n^{-s+1}\|w\|_{L^2(\T)}+n^{-2s+1}.
\end{equation}
Recalling that $w(\cdot,0)=0$, we conclude that
\begin{equation}
\label{eq: estimate sigma}
\|u_n^{\omega}(t)-v_n^{\omega}(t)\|_{L^2(\T)}\lesssim n^{-2s+1}
\end{equation}
for all $0\leq t\leq T_0$.

As $v_n^{\omega}$ is a solution to \eqref{eq: whitham}, Corollary \ref{cor: energy estimate} implies that for $k>s$,
\begin{align*}
\|v_n^\omega(t)\|_{H^k(\T)} \lesssim n^{k-s},
\end{align*}
for $t\in [0,T_0]$. We thereby get the "rough" estimate
\begin{align}
\|u_n^{\omega}(t)-v_n^{\omega}(t)\|_{H^k(\T)} & \leq \|u_n^{\omega}(t)\|_{H^k(\T)}+\|v_n^{\omega}(t)\|_{H^k(\T)} \nonumber \\
&\lesssim \|u_n^{\omega}(t)\|_{H^k(\T)}+\|u_n^{\omega}(0)\|_{H^k(\T)} \nonumber \\
& \lesssim n^{-s+k} \label{eq: rough estimate k}
\end{align}
for $k>s$ and $0\leq t\leq T_0$. Interpolating between \eqref{eq: estimate sigma} and \eqref{eq: rough estimate k} for $k>s$, we get
\begin{align*}
\|u_n^{\omega}(t)-v_n^{\omega}(t)\|_{H^s(\T)} & \leq \|u_n^{\omega}(t)-v_n^{\omega}(t)\|_{L^2(\T)}^{1-s/k} \|u_n^{\omega}(t)-v_n^{\omega}(t)\|_{H^k(\T)}^{s/k} \\
& \lesssim n^{(1-s)(1-\frac{s}{k})}.
\end{align*}
This proves part (i).

Now we turn to the case where $0<s\leq \frac{3}{2}$. As $\|u_n^\omega\|_{H^{\sigma}(\T)}\lesssim n^{\sigma-s}$, Theorem \ref{thm: local well-posedness} and Lemma \ref{thm: existence time} imply that $v_n^\omega\in H^{\sigma}(\T)$ exists and satisfies $\|v_n^\omega\|_{H^{\sigma}(\T)}\lesssim n^{\sigma-s}$ for $0\leq t\leq T\simeq n^{s-\sigma}$ for any $\sigma>\frac{3}{2}$. Taking $k>\frac{3}{2}$, we get that the estimate \eqref{eq: rough estimate k} holds for $0\leq t\leq T\simeq n^{s-\sigma}$. Moreover, \eqref{eq: diff inequality} still holds and using Gr\"onwall's inequality we conclude that
\begin{equation}
\label{eq: estimate sigma lower s}
\|u_n^{\omega}(t)-v_n^{\omega}(t)\|_{L^2(\T)}\lesssim n^{-2s+1} T
\end{equation}
for $0\leq T\lesssim n^{s-\sigma}$. Interpolating between \eqref{eq: estimate sigma lower s} and \eqref{eq: rough estimate k} for $k>\frac{3}{2}$, we get that
\begin{equation*}
\|u_n^{\omega}(t)-v_n^{\omega}(t)\|_{H^s(\T)}\lesssim n^{-(1/2)(1-s/k)}
\end{equation*}
for $0\leq t\leq T\simeq n^{s-\sigma}$. 
\end{proof}

We are now able to prove Theorem \ref{thm: periodic}:
\begin{proof}[Proof of Theorem \ref{thm: periodic}]
Let $v_n^{\omega}(x,t)$ be the $H^s(\T)$ solution to the Cauchy problem
\begin{align*}
& \partial_t v_n^{\omega}+v_n^{\omega}\partial_x v_n^{\omega}+L(\partial_x v_n^{\omega})=0, \\
& v_n^{\omega}(x,0)=\omega n^{-1}+n^{-s}\cos(nx).
\end{align*}
Assume first that $s>\frac{3}{2}$. By Lemma \ref{lem: approx exact difference s<2} we have that
\begin{align*}
\|v_n^{1}(t) &-v_n^{-1}(t)\|_{H^s(\T)} \\
& \geq  \|u_n^{1}(t)-u_n^{-1}(t)\|_{H^s(\T)}-\|u_n^{1}(t)-v_n^{1}(t)\|_{H^s(\T)} -\|u_n^{-1}(t)-v_n^{-1}(t)\|_{H^s(\T)} \\
& \gtrsim \|u_n^{1}(t)-u_n^{-1}(t)\|_{H^s(\T)}-n^{(1-s)(1-\frac{s}{k})}.
\end{align*}
As $(1-s)(1-\frac{s}{k})<0$ for $s>1$ and $k>s$, Lemma \ref{lem: distance at time t} then implies that
\begin{equation*}
\|v_n^{1}(t) -v_n^{-1}(t)\|_{H^s(\T)}\gtrsim |\sin(t)|
\end{equation*}
for all $0\leq t\leq T_0$ and $n\gg 1$. Moreover,
\begin{equation*}
\|v_n^{1}(0) -v_n^{-1}(0)\|_{H^s(\T)} \rightarrow 0
\end{equation*}
as $n\rightarrow \infty$. This proves part (i).

When $0<s\leq \frac{3}{2}$ the above arguments do not lead to a contradiction as the times $t$ for which they hold go to zero; we need the solutions to go apart much sooner. As noted in \cite{Molinet2007gwp}, the essential observation is that if $u(x,t)$ solves \eqref{eq: whitham} with initial data $u_0$, then $v(x,t)=u(x-\omega t,t)+\omega$ solves \eqref{eq: whitham} with initial data $u_0+\omega$, as is easily verified. The arguments in \cite{Molinet2007gwp} can be applied directly from this point, but we repeat them here or the sake of completeness. 

Let $v_n^0$ be a solution to the Cauchy problem above for $\omega=0$, and define $\tilde{v}_n^\omega(x,t):=v_n^0(x-\omega t,t)+\omega$. We pick $t_n\in [n^{-1+\varepsilon}, n^{s-\sigma}]$ for some $\varepsilon>0$ sufficiently small and set
\begin{equation*}
\omega_1=(n t_n)^{-1}\frac{\pi}{2},\quad \omega_2=-(n t_n)^{-1}\frac{\pi}{2}.
\end{equation*}
At time $t=0$, we get
\begin{equation*}
\|\tilde{v}_n^{\omega_1}(\cdot, 0) -\tilde{v}_n^{\omega_2}(\cdot,0)\|_{H^s(\T)}\simeq |\omega_1-\omega_2|\lesssim n^{-\varepsilon}\rightarrow 0
\end{equation*}
as $n\rightarrow \infty$. At $t=t_n$ we can use Lemma \ref{lem: approx exact difference s<2} (ii):
\begin{align*}
\|\tilde{v}_n^{\omega_1}(\cdot, t_n) -\tilde{v}_n^{\omega_2}(\cdot, t_n)\|_{H^s(\T)} = & \|v_n^0(\cdot-\omega_1 t_n,t_n)+\omega_1-v_n^0(\cdot-\omega_2 t_n,t_n)-\omega_2\|_{H^s(\T)} \\
\gtrsim & \|v_n^0(\cdot -\omega_1 t_n, t_n)-u_n^0(\cdot -\omega_1 t_n, t_n)\|_{H^s(\T)} \\
& +\|v_n^0(\cdot -\omega_2 t_n, t_n)-u_n^0(\cdot -\omega_2 t_n, t_n)\|_{H^s(\T)} \\
& + \|u_n^0(\cdot -\omega_1 t_n, t_n)-u_n^0(\cdot -\omega_2 t_n, t_n)\|_{H^s(\T)} \\
& - |\omega_1-\omega_2| \\
\simeq & 1+n^{-(1/2)(1-s/k)} -n^{-\varepsilon},
\end{align*}
where we calculated
\begin{align*}
\|u_n^0(x -\omega_1 t_n, t_n) & -u_n^0(x -\omega_2 t_n, t_n)\|_{H^s(\T)} \\
& =\|n^{-s}(\cos(-n m(n)t_n+nx-\pi/2)-\cos(-n m(n)t_n+nx+\pi/2))\|_{H^s(\T)} \\
& =\|2n^{-s}\sin(-n m(n)t_n+nx)\|_{H^s(\T)} \\
& \simeq 1.
\end{align*}
Taking $n\rightarrow \infty$, this concludes the proof of part (ii).
\end{proof}
\section{Non-uniform continuity on the real line}
\label{sec: line}
In this section we prove the lack of uniform continuity for the flow map of the Whitham equation \eqref{eq: whitham} on $\R$. That is, we will prove Theorem \ref{thm: line}. As in the periodic case (cf. Section \ref{sec: periodic}), Theorem \ref{thm: line} will be proven by constructing two sequences of approximate solution in $H^s(\R)$ that converge to the same limit at time $0$, while remaining bounded apart at any later time and showing that the approximate solutions are sufficiently close to real solutions. The idea for the proof is from \cite{Koch2005nwi}.

In the sequel, $\delta$ will always denote a number $1<\delta<2$ that we may choose freely and $\lambda$ will be a positive parameter. For convenience of notation we will denote $f_\lambda(x):=f(\frac{x}{\lambda^\delta})$ for functions $f: \R \rightarrow \R$ and $\lambda>0$.
The following lemma will be useful in the sequel.
\begin{lemma}[\cite{Koch2005nwi}]
\label{lem: norm varphi sin}
Let $\varphi\in \mathscr{S}(\R)$, $1<\delta<2$ and $\alpha\in \R$. Then for any $s\geq 0$ we have that
\begin{equation*}
\lim_{\lambda\rightarrow \infty} \lambda^{-\delta/2-s}\|\varphi_\lambda \cos(\lambda \cdot +\alpha)\|_{H^s(\R)}=\frac{1}{\sqrt{2}}\|\varphi\|_{L^2(\R)}.
\end{equation*}
The statement holds true also if $\cos$ is replaced by $\sin$.
\end{lemma}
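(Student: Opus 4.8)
The plan is to compute the $H^s$ norm on the Fourier side, exploiting that $\varphi_\lambda$ has spectrum concentrated in a window of width $\sim\lambda^{-\delta}$ about the origin while multiplication by $\cos(\lambda\cdot+\alpha)$ merely translates this window to the two frequencies $\pm\lambda$, which are far apart on the scale $\lambda^{-\delta}$. Concretely, since $\widehat{\varphi_\lambda}(\xi)=\lambda^{\delta}\widehat{\varphi}(\lambda^{\delta}\xi)$ and $\cos(\lambda x+\alpha)=\tfrac12 e^{i\alpha}e^{i\lambda x}+\tfrac12 e^{-i\alpha}e^{-i\lambda x}$, translation in frequency gives
\[
\widehat{\varphi_\lambda\cos(\lambda\cdot+\alpha)}(\xi)=\tfrac12 e^{i\alpha}\widehat{\varphi_\lambda}(\xi-\lambda)+\tfrac12 e^{-i\alpha}\widehat{\varphi_\lambda}(\xi+\lambda),
\]
so that
\[
\|\varphi_\lambda\cos(\lambda\cdot+\alpha)\|_{H^s(\R)}^2=\int_{\R}(1+\xi^2)^s\Bigl|\tfrac12 e^{i\alpha}\widehat{\varphi_\lambda}(\xi-\lambda)+\tfrac12 e^{-i\alpha}\widehat{\varphi_\lambda}(\xi+\lambda)\Bigr|^2\dxi,
\]
which I expand into the two diagonal squares and one cross term and treat each separately.

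For the diagonal term coming from $\widehat{\varphi_\lambda}(\xi-\lambda)$ I substitute $\xi=\lambda+\lambda^{-\delta}\zeta$, which turns it into
\[
\tfrac14\int_{\R}\bigl(1+(\lambda+\lambda^{-\delta}\zeta)^2\bigr)^s\,\lambda^{2\delta}\,|\widehat{\varphi}(\zeta)|^2\,\lambda^{-\delta}\dz
=\tfrac14\lambda^{\delta+2s}\int_{\R}\Bigl(\tfrac{1}{\lambda^2}+(1+\lambda^{-\delta-1}\zeta)^2\Bigr)^{s}|\widehat{\varphi}(\zeta)|^2\dz .
\]
For each fixed $\zeta$ the integrand converges to $|\widehat{\varphi}(\zeta)|^2$ as $\lambda\to\infty$, and for $\lambda\ge 1$ one has $\lambda^{-\delta-1}\le1$, hence $\bigl(\tfrac1{\lambda^2}+(1+\lambda^{-\delta-1}\zeta)^2\bigr)^s\le 2^s(1+\zeta^2)^s$, which provides an integrable majorant since $\widehat\varphi\in\mathscr{S}(\R)$. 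By dominated convergence this diagonal term equals $\lambda^{\delta+2s}\bigl(\tfrac14\|\widehat{\varphi}\|_{L^2(\R)}^2+o(1)\bigr)=\lambda^{\delta+2s}\bigl(\tfrac14\|\varphi\|_{L^2(\R)}^2+o(1)\bigr)$ by Plancherel; the diagonal term from $\widehat{\varphi_\lambda}(\xi+\lambda)$ is handled identically and gives the same contribution.

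For the cross term $\tfrac12\real\bigl[e^{2i\alpha}\int_{\R}(1+\xi^2)^s\widehat{\varphi_\lambda}(\xi-\lambda)\overline{\widehat{\varphi_\lambda}(\xi+\lambda)}\dxi\bigr]$, the substitution $\xi=\lambda^{-\delta}\zeta$ converts the integral into $\lambda^{\delta}\int_{\R}(1+\lambda^{-2\delta}\zeta^2)^s\,\widehat{\varphi}(\zeta-\lambda^{\delta+1})\overline{\widehat{\varphi}(\zeta+\lambda^{\delta+1})}\dz$. Since the arguments of the two factors are separated by $2\lambda^{\delta+1}\to\infty$ and $\widehat\varphi$ decays faster than any polynomial — again using $\lambda^{-2\delta}\le 1$ to dominate the weight by $(1+\zeta^2)^s$ — this integral is $\bigO(\lambda^{-M})$ for every $M>0$, so the cross term is $o(\lambda^{\delta+2s})$. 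Collecting the three contributions gives $\|\varphi_\lambda\cos(\lambda\cdot+\alpha)\|_{H^s(\R)}^2=\lambda^{\delta+2s}\bigl(\tfrac12\|\varphi\|_{L^2(\R)}^2+o(1)\bigr)$, and multiplying by $\lambda^{-\delta-2s}$ and taking square roots yields the claim; the $\sin$ case is word for word the same. The only point requiring care is producing the uniform-in-$\lambda$ integrable majorant in the dominated convergence step, but the Schwartz decay of $\widehat\varphi$ together with $\lambda^{-\delta}\le1$ for $\lambda\ge1$ settles it, and everything else is routine bookkeeping.
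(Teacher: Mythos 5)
Your argument is correct and is essentially the proof of Lemma 2.3 in \cite{Koch2005nwi}, to which the paper defers without reproducing it: split the cosine into exponentials, note that the two translated copies of $\widehat{\varphi_\lambda}$ sit near $\pm\lambda$ while being concentrated on the much smaller scale $\lambda^{-\delta}$, evaluate each diagonal piece by rescaling plus dominated convergence, and discard the cross term via the rapid decay of $\widehat{\varphi}$. (Minor point: your majorant constant $2^s$ should be, say, $3^s$ to cover the $\lambda^{-2}$ term near $\lambda=1$, but any fixed constant suffices for dominated convergence, so nothing is affected.)
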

Lemma \ref{lem: norm varphi sin} can be found as Lemma 2.3 in \cite{Koch2005nwi} and a proof is given there.

We construct a two-parameter family of approximate solutions $u^{\omega,\lambda}=u^{\omega,\lambda}(t,x)$, following \cite{Himonas2009nud}. Each function $u^{\omega,\lambda}$ consists of two parts;
\begin{equation*}
u^{\omega,\lambda}=u_l+u^h.
\end{equation*}
The high frequency part $u^h$ is given by
\begin{equation*}
u^h=u^{h,\omega,\lambda}(t,x)=\lambda^{-\delta/2-s}\varphi_{\lambda}(x)\cos(-\lambda m(\lambda) t+ \lambda x-\omega t),
\end{equation*}
where $\varphi$ is a $C^\infty$ function such that
\begin{equation*}
\varphi(x) = \begin{cases} 1, & \text{if } |x|< 1, \\ 0, & \text{if } |x|\geq 2. \end{cases}
\end{equation*}
To simplify the notation we set $\Phi=-\lambda m(\lambda) t+ \lambda x-\omega t$.
The low frequency part $u_l=u_{l,\omega,\lambda}(t,x)$ is a solution to the following Cauchy problem:
\begin{align}
\partial_t u_l+u_l\partial_x u_l+L (\partial_x u_l)=0, \label{eq: cauchy problem u_l} \\
u_l(0,x)=\omega \lambda^{-1}\tilde{\varphi}_\lambda (x), \nonumber
\end{align}
where $\tilde{\varphi}$ is a $C_0^\infty (\R)$ function such that
\begin{equation*}
\tilde{\varphi}(x)=1, \,\, \text{if} \,\, x\in \supp \varphi.
\end{equation*}

\begin{lemma}
\label{lem: norm u_l}
Let $s>\frac{3}{2}$ and $\lambda>0$. Then the solution $u_l$ to the Cauchy problem \eqref{eq: cauchy problem u_l} exists and is unique in $H^s(\R)$ up to some time $T\gtrsim \lambda^{1-\delta/2}$. In fact, the estimate
\begin{equation}
\label{eq: u_l Hs norm}
\|u_l(t)\|_{H^r(\R)}\lesssim \lambda^{-1+\delta/2}
\end{equation}
holds for any $r\in \R$ and all times $0\leq t\leq T_0$ for some $T_0\simeq \lambda^{1-\delta/2}$. In particular, this means that the existence time goes to $\infty$ as $\lambda\rightarrow \infty$, while the $H^r(\R)$ norm goes to $0$ for any $r\in \R$.
\end{lemma}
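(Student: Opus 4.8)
The plan is to reduce the statement to the scaling behaviour of the initial datum $u_l(0,\cdot)=\omega\lambda^{-1}\tilde\varphi_\lambda$ together with the a priori bounds collected in Section~\ref{sec: preliminaries}. \textbf{Step 1 (scaling of the datum).} For a Schwartz function $f$ and $\mu=\lambda^\delta$, the rescaling $f(\cdot/\mu)$ has Fourier transform $\mu\widehat f(\mu\,\cdot)$, so
\begin{equation*}
\|f(\cdot/\mu)\|_{H^r(\R)}^2=\mu\int_\R(1+(\eta/\mu)^2)^r|\widehat f(\eta)|^2\,\mathrm d\eta\lesssim\mu\,\|f\|_{H^{\max(r,0)}(\R)}^2 ,
\end{equation*}
using $(1+(\eta/\mu)^2)^r\le(1+\eta^2)^r$ for $r\ge0$ and $(1+(\eta/\mu)^2)^r\le1$ for $r<0$, both valid once $\mu\ge1$, i.e.\ for $\lambda$ large. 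Taking $f=\tilde\varphi$ gives
\begin{equation*}
\|u_l(0)\|_{H^r(\R)}\lesssim\lambda^{-1}\lambda^{\delta/2}=\lambda^{-1+\delta/2}\qquad\text{for every }r\in\R ,
\end{equation*}
with the implied constant depending only on $r$, $\omega$ and $\tilde\varphi$; since $1<\delta<2$ the exponent $-1+\delta/2$ is strictly negative, so in particular $\|u_l(0)\|_{H^s(\R)}\to0$ as $\lambda\to\infty$.

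\textbf{Step 2 (existence time and the $H^s$ bound).} As $s>\tfrac32$, Theorem~\ref{thm: local well-posedness} and Lemma~\ref{thm: existence time} apply to the datum $u_l(0)$ and produce a unique solution $u_l\in C([0,T);H^s(\R))$ with
\begin{equation*}
T\ge\frac{1}{c_s\|u_l(0)\|_{H^s(\R)}}\gtrsim\lambda^{1-\delta/2} ,
\end{equation*}
which is the asserted lower bound on the existence time. Fix $T_0:=\tfrac12(c_s\|u_l(0)\|_{H^s(\R)})^{-1}\simeq\lambda^{1-\delta/2}$. Then \eqref{eq: bound on solution norm} gives $\|u_l(t)\|_{H^s(\R)}\le2\|u_l(0)\|_{H^s(\R)}\lesssim\lambda^{-1+\delta/2}$ for all $0\le t\le T_0$, and by the embedding $H^s(\R)\hookrightarrow H^r(\R)$ the bound \eqref{eq: u_l Hs norm} follows for every $r\le s$.

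\textbf{Step 3 (the $H^r$ bound for $r>s$).} For $r>s$ I would apply Corollary~\ref{cor: energy estimate} with the above $T_0$. With that choice $T_0\,c_s\|u_l(0)\|_{H^s(\R)}=\tfrac12$, so the constant $C_2$ occurring in the proof of the corollary equals $2$, and $t\|u_l(0)\|_{H^s(\R)}\le T_0\|u_l(0)\|_{H^s(\R)}=\tfrac1{2c_s}$ on $[0,T_0]$; hence the Gr\"onwall prefactor $\exp(Ct\|u_l(0)\|_{H^s(\R)})$ is bounded by a constant independent of $\lambda$. Combining this with Step~1,
\begin{equation*}
\|u_l(t)\|_{H^r(\R)}\lesssim\|u_l(0)\|_{H^r(\R)}\lesssim\lambda^{-1+\delta/2},\qquad 0\le t\le T_0 ,
\end{equation*}
which together with Step~2 establishes \eqref{eq: u_l Hs norm} for all $r\in\R$.

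\textbf{Main obstacle.} There is no genuine analytic difficulty here; the argument is essentially bookkeeping in the parameter $\lambda$. The one point that needs care is that the implied constants in \eqref{eq: u_l Hs norm} must be \emph{uniform} in $\lambda$, which forces one to take $T_0$ as a fixed fraction of the guaranteed existence time $(c_s\|u_l(0)\|_{H^s(\R)})^{-1}$ rather than as an absolute time, so that the energy-estimate constant of Corollary~\ref{cor: energy estimate} does not degenerate as $\lambda\to\infty$.
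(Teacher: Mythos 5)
Your argument is correct and follows essentially the same route as the paper: bound $\|u_l(0)\|_{H^r(\R)}\lesssim\lambda^{-1+\delta/2}$ by scaling, invoke Theorem \ref{thm: local well-posedness} and Lemma \ref{thm: existence time} for the existence time, and then Corollary \ref{cor: energy estimate} with $T_0$ a fixed fraction of $(c_s\|u_l(0)\|_{H^s(\R)})^{-1}$ for the uniform-in-$\lambda$ bound on $[0,T_0]$. The paper compresses Steps 1 and 3 into a sentence each; your explicit verification that the Gr\"onwall prefactor stays bounded as $\lambda\to\infty$ is exactly the point the paper is implicitly relying on.
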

\begin{proof}
Clearly, $\|u_l(0)\|_{H^r(\R)}\lesssim \lambda^{-1+\delta/2}$ for any $r\in \R$. Theorem \ref{thm: local well-posedness} and Lemma \ref{thm: existence time} then imply that $u_l(t,x)\in H^s(\R)$ exists and is unique for $0\leq t<T$, for some $T\geq (c_s \|u_l(0)\|_{H^s(\R)})^{-1}\gtrsim \lambda^{1-\delta/2}$. Choosing $T_0<T$, for instance $T_0=\frac{1}{2}(c_s \|u_l(0)\|_{H^s(\R)})^{-1}\gtrsim \lambda^{1-\delta/2}$, Corollary \ref{cor: energy estimate} implies the estimate \eqref{eq: u_l Hs norm}. The constant implied in the symbol $\lesssim$ in \eqref{eq: u_l Hs norm} depends on $r$ through the constant $c_r$ (cf. the proof of Corollary \ref{cor: energy estimate}), but for any fixed $r\in \R$ the asymptotic behaviour with respect to the parameter $\lambda$ will be the same.
\end{proof}

The next lemma states that $u^{\omega,\lambda}$ almost solves equation \eqref{eq: whitham} when $\lambda \gg 1$.
\begin{lemma}
\label{lem: error estimate line}
Set
\begin{equation*}
F=\partial_t u^{\omega,\lambda}+ u^{\omega,\lambda}\partial_x u^{\omega,\lambda}+L(\partial_x u^{\omega,\lambda}).
\end{equation*}
If $\delta\in (1,2)$ is chosen such that $\max \lbrace 1, \gamma\rbrace <\delta <2$, where $0\leq \gamma<2$ is as in the statement of Theorem \ref{thm: line}, then
\begin{equation*}
\|F(t)\|_{L^2(\R)}\lesssim \lambda^{-s-\varepsilon},
\end{equation*}
for some $\varepsilon>0$ and all $0\leq t\leq T_0$ where $T_0$ is as in Lemma \ref{lem: norm u_l}.
\end{lemma}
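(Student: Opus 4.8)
The strategy is to insert the ansatz $u^{\omega,\lambda}=u_l+u^h$ into the equation, cancel the terms that vanish by construction of $u^h$ and $u_l$, and bound everything that is left in $L^2(\R)$. Writing $F=\partial_t u^{\omega,\lambda}+u^{\omega,\lambda}\partial_x u^{\omega,\lambda}+L(\partial_x u^{\omega,\lambda})$ and using that $u_l$ solves \eqref{eq: cauchy problem u_l} exactly, we get
\begin{equation*}
F=\bigl(\partial_t u^h+L(\partial_x u^h)\bigr)+\bigl(u_l\partial_x u^h+u^h\partial_x u_l\bigr)+u^h\partial_x u^h.
\end{equation*}
The plan is to estimate the three groups separately. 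The cubic-in-amplitude term $u^h\partial_x u^h$ is straightforward: $u^h$ has amplitude $\lambda^{-\delta/2-s}$ and $\partial_x u^h$ gains a factor $\lambda$ from differentiating the oscillation (the derivative falling on $\varphi_\lambda$ only produces a factor $\lambda^{-\delta}$, hence is lower order), so after using Lemma \ref{lem: norm varphi sin}-type $L^2$ bounds for the product of two such oscillatory factors one obtains a bound of order $\lambda^{-2s-\delta/2+1}\lesssim\lambda^{-s-\varepsilon}$ since $\delta>1$ and $s>0$. The transport-type term $u^h\partial_x u_l$ is controlled using the smallness of $u_l$ from Lemma \ref{lem: norm u_l}: $\|u^h\partial_x u_l\|_{L^2}\leq\|u^h\|_{L^2}\|\partial_x u_l\|_{L^\infty}\lesssim\lambda^{\delta/4-s/?}\cdot\lambda^{-1+\delta/2}$, and one checks the exponent is $<-s$.

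**The main term and the dispersive cancellation.** The delicate group is $\partial_t u^h+L(\partial_x u^h)+u_l\partial_x u^h$. Here the phase $\Phi=-\lambda m(\lambda)t+\lambda x-\omega t$ was chosen precisely so that, acting on $\cos\Phi$, the operator $\partial_t+\partial_x$ (recall $u_l$ at leading order acts like a near-constant drift $\sim\omega\lambda^{-1}$ on $\supp\varphi_\lambda$) roughly matches $-L(\partial_x\,\cdot)$ at the single frequency $\lambda$. Concretely, $\partial_t$ hitting $\cos\Phi$ brings down $-(\lambda m(\lambda)+\omega)\sin\Phi$, while $L(\partial_x\cdot)$ hitting $\varphi_\lambda(x)\cos\Phi$ would, if $\varphi_\lambda$ were absent, bring down $-\lambda m(\lambda)\sin\Phi$; the $\omega$-discrepancy is cancelled by the $\omega\lambda^{-1}\tilde\varphi_\lambda\cdot\lambda=\omega$ piece of $u_l\partial_x u^h$ (this is the role of $\tilde\varphi\equiv1$ on $\supp\varphi$). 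What remains is the commutator-type error from $L$ seeing the slowly varying envelope $\varphi_\lambda$ rather than a pure exponential. This is where condition \eqref{eq: cond. m} enters: $\|L(\partial_x(\varphi_\lambda\cos\Phi))-(\text{pure-frequency part})\|_{L^2}$ is estimated by writing $L(\partial_x u^h)$ on the Fourier side, Taylor-expanding $m(\xi)$ around $\xi=\lambda$ for $\xi$ in the (essentially $O(\lambda^{-\delta})$-wide, since $\widehat{\varphi_\lambda}$ concentrates at scale $\lambda^{-\delta}$) support of $\widehat{u^h}$, and using $|m(\lambda+y)-m(\lambda)|\lesssim|y|\lambda^{\gamma-1}$. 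This yields an error of size $\lesssim\lambda^{-\delta}\cdot\lambda^{\gamma-1}\cdot\lambda\cdot\|u^h\|_{L^2}$ (frequency width $\times$ the $m$-increment bound $\times$ the $\partial_x$ factor), and combined with $\|u^h\|_{L^2}\lesssim\lambda^{\delta/4-?}$ one gets decay $\lambda^{-s-\varepsilon}$ exactly because $\delta>\gamma$ and $\delta>1$, i.e.\ the condition $\max\{1,\gamma\}<\delta<2$ is what makes the exponent strictly negative with room to spare.

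**Main obstacle.** The hard part is making the previous paragraph rigorous: controlling $L(\partial_x u^h)$, where $L$ is a general (possibly inhomogeneous, merely locally bounded) multiplier, acting on the spatially localized wave packet $\varphi_\lambda(x)\cos\Phi$. One must pass to Fourier variables, split the frequency integral into the region $|\xi-\lambda|\lesssim\lambda$ (where the Taylor expansion and \eqref{eq: cond. m} apply, using that $\widehat{\varphi_\lambda}(\xi)=\lambda^\delta\widehat{\varphi}(\lambda^\delta\xi)$ decays rapidly) and the far region $|\xi-\lambda|\gtrsim\lambda$ (handled by the Schwartz decay of $\widehat{\varphi}$, which beats the polynomial growth $|m(\xi)|\lesssim|\xi|^\gamma$). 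One also needs the analogous claim for $L$ acting on $u_l$-interaction terms, but those are milder. I would organize the computation as: (1) exact algebraic identity for $F$; (2) $L^2$ bound for $u^h\partial_x u^h$; (3) $L^2$ bound for $u_l\partial_x u^h+u^h\partial_x u_l$ using Lemma \ref{lem: norm u_l} and Lemma \ref{lem: norm varphi sin}; (4) the dispersive-cancellation estimate for $\partial_t u^h+L(\partial_x u^h)+(\text{drift part of }u_l\partial_x u^h)$ via the Fourier-side analysis above; (5) collect exponents and verify each is $\leq -s-\varepsilon$ under $\max\{1,\gamma\}<\delta<2$. Step (4) is the crux and the only place the full strength of the hypotheses on $m$ is used.
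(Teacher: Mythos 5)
Your plan follows the paper's proof in all its main lines: the same splitting of $F$ after using that $u_l$ is an exact solution, the same cancellation of $\lambda m(\lambda)\lambda^{-\delta/2-s}\varphi_\lambda\sin\Phi$ against $\varphi_\lambda L(\sin\Phi)$ via the decomposition $L(\varphi_\lambda\sin\Phi)=[L,\varphi_\lambda]\sin\Phi+\varphi_\lambda L(\sin\Phi)$, the same Fourier-side estimate of $[L,\varphi_\lambda]\sin\Phi$ using the rapid decay of $\widehat\varphi$ away from a small neighbourhood of $\xi=\lambda$ together with hypothesis \eqref{eq: cond. m} near $\xi=\lambda$, and the same elementary bounds for $u^h\partial_x u^h$ and $u^h\partial_x u_l$. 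The exponents you leave with question marks are recoverable (e.g.\ $\|u^h\|_{L^2(\R)}\simeq\lambda^{-s}$ by Lemma \ref{lem: norm varphi sin}), and your heuristic $\lambda^{-\delta}\cdot\lambda^{\gamma-1}\cdot\lambda\cdot\lambda^{-s}=\lambda^{\gamma-\delta-s}$ for the commutator contribution is exactly the paper's bound for $F_4$.

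The genuine gap is your dismissal of the remaining $u_l$-interaction terms as ``milder''. The $\omega$-discrepancy in $\partial_t u^h$ is cancelled by the drift part of $u_l\partial_x u^h$ only at $t=0$; for $t>0$ what survives is
\begin{equation*}
F_1=\bigl(u_l(0,x)-u_l(t,x)\bigr)\,\lambda^{-\delta/2-s+1}\varphi_\lambda(x)\sin\Phi ,
\end{equation*}
and to get $\|F_1\|_{L^2(\R)}\lesssim\lambda^{-s-\varepsilon}$ one needs $\|u_l(t)-u_l(0)\|_{L^2(\R)}\lesssim\lambda^{\delta/2-1-\varepsilon}$. Bounding $\|u_l(t)-u_l(0)\|_{L^2(\R)}$ by integrating $\|\partial_t u_l\|_{L^2(\R)}$ and estimating $\|L(\partial_x u_l)\|_{L^2(\R)}\lesssim\|u_l\|_{H^{1+\gamma}(\R)}\lesssim\lambda^{-1+\delta/2}$ via Lemma \ref{lem: norm u_l} gives only $\|F_1\|_{L^2(\R)}\lesssim\lambda^{-s}$, with no $\varepsilon$-gain, so the estimate does not close. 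This is precisely where the paper invokes Proposition \ref{prop: approx scaling}: $u_l$ agrees, up to an error $\bigO(\lambda^{-1-\delta/2+\delta k/r})$, with the long-wave rescaling $u(\lambda^{-\delta}t,\lambda^{-\delta}x)$ of a solution with $\lambda$-independent profile, which yields the much stronger bound $\|L(\partial_x u_l)\|_{L^2(\R)}\lesssim\lambda^{-1-\delta/2}+\lambda^{-1-\delta/2+\delta(\gamma+1)/r}$, hence $\|u_l(t)-u_l(0)\|_{L^2(\R)}\lesssim\lambda^{-2+\delta}$ and $\|F_1\|_{L^2(\R)}\lesssim\lambda^{-1+\delta/2-s}$. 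Because $m$ is inhomogeneous the rescaled function is itself only an approximate solution, so this proposition needs its own argument; it is the main new ingredient relative to the homogeneous (BO/fKdV) case, and your plan as written has no substitute for it.
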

In order to prove Lemma \ref{lem: error estimate line}, we will make use of the following preposition which states that for a low-frequency solution to equation \eqref{eq: whitham}, there is a scaling in time and space such that the rescaled solution almost remains a solution:
\begin{proposition}
\label{prop: approx scaling}
Let $u$ be the solution to 
\begin{align*}
\partial_t u+u\partial_x u+L (\partial_x u)=0, \nonumber \\
u(0,x)=\omega \lambda^{-1}\tilde{\varphi}(x),
\end{align*}
and set $v(t,x)=u(\lambda^{-\delta}t, \lambda^{-\delta} x)$. Then $v$ is "almost" a solution to \eqref{eq: cauchy problem u_l} in the sense that
\begin{equation*}
u_l=v+\bigO(\lambda^{-1-\delta/2})
\end{equation*}
in the $L^2(\R)$ norm, and for any $r>3/2$ and $0<k<r$,
\begin{equation*}
u_l=v+\bigO(\lambda^{-1-\delta/2+\delta k/r})
\end{equation*}
in $H^k(\R)$ for all $0\leq t\leq T$, for some $T>T_0$ for $T_0$ as in Lemma \ref{lem: norm u_l}.
\end{proposition}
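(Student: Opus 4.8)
The plan is to exploit that $v$ and $u_l$ have the same initial datum — $v(0,\cdot)=u(0,\lambda^{-\delta}\cdot)=\omega\lambda^{-1}\tilde\varphi(\lambda^{-\delta}\cdot)=\omega\lambda^{-1}\tilde\varphi_\lambda=u_l(0,\cdot)$ — and that the only obstruction to $v$ solving \eqref{eq: cauchy problem u_l} is that $L$ fails to commute with the dilation $x\mapsto\lambda^{-\delta}x$. Writing $\tau=\lambda^{-\delta}t$ and differentiating $v(t,x)=u(\tau,\lambda^{-\delta}x)$, the transport part of the equation scales exactly, $\partial_tv+v\partial_xv=-\lambda^{-\delta}(L\partial_xu)(\tau,\lambda^{-\delta}x)$, so the defect is
\[
R:=\partial_tv+v\partial_xv+L(\partial_xv)=L(\partial_xv)-\lambda^{-\delta}(L\partial_xu)(\tau,\lambda^{-\delta}\cdot),
\]
with Fourier transform $\widehat R(t,\xi)=\bigl(m(\xi)-m(\lambda^\delta\xi)\bigr)\,i\lambda^\delta\xi\,\widehat u(\tau,\lambda^\delta\xi)$. (Equivalently and more cleanly, the rescaled function $\tilde u_l(\tau,y):=u_l(\lambda^\delta\tau,\lambda^\delta y)$ solves the same equation as $u$, with the same datum $\omega\lambda^{-1}\tilde\varphi$, but with $m$ replaced by the rescaled symbol $m(\lambda^{-\delta}\,\cdot\,)$, and $R$ is simply the difference of the two dispersive terms.)

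To bound $R$: since $\|u(0)\|_{H^N}\lesssim\lambda^{-1}$ for every $N$ (with $N$-dependent constant), Theorem \ref{thm: local well-posedness}, Lemma \ref{thm: existence time} and Corollary \ref{cor: energy estimate} give $\|u(\tau)\|_{H^N}\lesssim\lambda^{-1}$ uniformly for $\tau$ in an interval of length $\simeq\lambda$, hence for all $\tau=\lambda^{-\delta}t$ with $0\le t\le T_0\simeq\lambda^{1-\delta/2}$. Hypothesis \eqref{eq: cond. m} gives $|m(\eta)|\lesssim\langle\eta\rangle^\gamma$ for $|\eta|>N$, and $m\in L^\infty_{loc}$ covers $|\eta|\le N$, so $|m(\lambda^{-\delta}\eta)-m(\eta)|\lesssim\langle\eta\rangle^\gamma$ once $\lambda$ is large. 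Inserting this in $\widehat R$, changing variables $\eta=\lambda^\delta\xi$ (Jacobian $\lambda^{-\delta}$), and absorbing the weight $\langle\eta\rangle^{2+2\gamma}$ against the rapid decay of $\widehat u$ through $\|u(\tau)\|_{H^{1+\gamma}}^2\lesssim\lambda^{-2}$, I obtain $\|R(t)\|_{L^2(\R)}\lesssim\lambda^{-1-\delta/2}$.

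Now set $w=u_l-v$; subtracting the two equations gives $\partial_tw+\tfrac12\partial_x\!\bigl(w(u_l+v)\bigr)+L(\partial_xw)=-R$ with $w(0)=0$. The $L^2$ energy estimate annihilates $L(\partial_xw)$ ($m$ even) and reduces the nonlinearity to $\tfrac14\int w^2\partial_x(u_l+v)\dx$, whence $\tfrac{d}{dt}\|w\|_{L^2}\lesssim\|\partial_x(u_l+v)\|_{L^\infty}\|w\|_{L^2}+\|R\|_{L^2}$. By Lemma \ref{lem: norm u_l}, $\|\partial_xu_l\|_{L^\infty}\lesssim\|u_l\|_{H^2}\lesssim\lambda^{-1+\delta/2}$, and by the rescaling $\|\partial_xv\|_{L^\infty}=\lambda^{-\delta}\|\partial_xu(\tau)\|_{L^\infty}\lesssim\lambda^{-1-\delta}$, so the coefficient is $\lesssim\lambda^{-1+\delta/2}$ and Gr\"onwall yields $\|w(t)\|_{L^2}\lesssim(1+t)\lambda^{-1-\delta/2}$. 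For the $H^k$ assertion I would not estimate $R$ in $H^k$, but interpolate this with the rough bound $\|w\|_{H^r}\le\|u_l\|_{H^r}+\|v\|_{H^r}\lesssim\lambda^{-1+\delta/2}$ (again Lemma \ref{lem: norm u_l}, plus the rescaling of $u$), giving $\|w\|_{H^k}\le\|w\|_{L^2}^{1-k/r}\|w\|_{H^r}^{k/r}\lesssim\lambda^{-1-\delta/2+\delta k/r}$, exactly the claimed exponent; a short continuation/bootstrap argument then keeps the estimates valid slightly past $T_0$.

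The step I expect to be the real obstacle is propagating the $L^2$ bound over the long, $\lambda$-dependent window $[0,T]$ with $T>T_0\simeq\lambda^{1-\delta/2}$, rather than over a bounded interval: $\|R(t)\|_{L^2}\simeq\lambda^{-1-\delta/2}$ is of fixed size and does not decay ($\|u(\tau)\|_{L^2}$ is conserved), and at the frequencies $\lesssim\lambda^{-\delta}$ carried by $u_l$ and $v$ the linear flow is essentially the identity over times $\lesssim\lambda^\delta$, so there is no dispersive gain to borrow; the bare Gr\"onwall output $(1+t)\lambda^{-1-\delta/2}$ is thus the asserted $\lambda^{-1-\delta/2}$ only for bounded $t$ and degrades to $\lambda^{-\delta}$ at $t\simeq T_0$. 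Closing the clean bound uniformly on $[0,T_0]$ would require finer structure of $R$ (that it is the error of an exact symmetry of the pure transport flow and so does not genuinely accumulate), but one may instead observe that the weaker $\lambda^{-\delta}$ still carries a spare power of $\lambda$ — as $\delta>1$ — so it already suffices wherever Lemma \ref{lem: error estimate line} invokes this proposition; a secondary, routine point is that the low-frequency range $|\eta|\lesssim1$ of the symbol difference has to be handled directly from $m\in L^\infty_{loc}$, since \eqref{eq: cond. m} only constrains $m$ at large argument.
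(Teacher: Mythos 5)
Your argument is correct and is essentially the paper's own proof: the same scaling defect (your $R$ is the paper's $E$, with the identical Fourier-side expression and the identical bound $\lambda^{-\delta/2}\|u(\tau)\|_{H^{1+\gamma}(\R)}\lesssim\lambda^{-1-\delta/2}$), the same $L^2$ energy estimate for $w=u_l-v$ in which $L(\partial_x w)$ drops out by evenness of $m$, and the same interpolation against the crude $H^r$ bound $\lambda^{-1+\delta/2}$. The time-accumulation subtlety you flag is genuine and is in fact elided in the paper, which passes from $\frac{\mathrm{d}}{\mathrm{d}t}\|w\|_{L^2(\R)}\lesssim\lambda^{-1-\delta/2}$ to $\|w\|_{L^2(\R)}\lesssim\lambda^{-1-\delta/2}$ without accounting for $t\le T_0\simeq\lambda^{1-\delta/2}$; your observation that the degraded bound $\lambda^{-\delta}$ (and its interpolants) still suffices at every point where the proposition is invoked in Lemma \ref{lem: error estimate line}, since $\delta>1$ leaves the needed margin over $\lambda^{-2+\delta}$, is the right repair.
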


\begin{proof}
Clearly, as $\tilde{\varphi}\in C_0^\infty(\R)$, $\|u(0)\|_{H^r(\R)}\lesssim \lambda^{-1}$ for all $r\in \R$. It then follows from Theorem \ref{thm: local well-posedness}, Lemma \ref{thm: existence time} and Corollary \ref{cor: energy estimate} that 
\begin{equation}
\label{eq: Hr norm u}
\|u(t)\|_{H^r(\R)}\lesssim \|u(0)\|_{H^r(\R)}\lesssim \lambda^{-1}
\end{equation}
for all $t\in [0,T_0]$ and all $r>0$. In fact, this holds for $0\leq t\leq \lambda^{\delta/2}T_0$.

Consider now a "long wave" $v(t,x)=u(\lambda^{-\delta}t, \lambda^{-\delta} x)$. Then $v(0,x)=u_l(0,x)$. By simply adding and subtracting, we get that
\begin{align*}
L(\partial_x v)(x)  = &\lambda^\delta \int_{\R}\mathrm{e}^{\mathrm{i}x\xi} m(\xi)\mathrm{i}\xi \widehat{u}(\lambda^\delta \xi)\dxi \nonumber \\
= & \lambda^{-\delta}\int_{\R}\mathrm{e}^{\mathrm{i}x\lambda^{-\delta}y} m(\lambda^{-\delta}y)\mathrm{i}y \widehat{u}(y)\dy \nonumber \\
= & \lambda^{-\delta}\int_{\R}\mathrm{e}^{\mathrm{i}x\lambda^{-\delta}y}m(y)\mathrm{i}y \widehat{u}(y)\dy \nonumber \\
& +\lambda^{-\delta}\int_{\R}\mathrm{e}^{\mathrm{i}x\lambda^{-\delta}y}(m(\lambda^{-\delta} y)-m(y))\mathrm{i}y \widehat{u}(y)\dy \nonumber \\
= & \lambda^{-\delta} L(\partial_x u)(\lambda^{-\delta} x)+E. 
\end{align*}
Using this expression for $E$ and that $u$ is an exact solution to \eqref{eq: whitham}, we see that
\begin{align*}
\partial_t v+v\partial_x v+L(\partial_x v)= & \lambda^{-\delta}(\partial_t u+u\partial_x u+L(\partial_x u)) + E\\
= & E.
\end{align*}
Now we estimate the error $E$. Using the basic identity $\F [f(a \cdot)](\xi)=|a|^{-1}\widehat{f}(a^{-1}\xi)$ for any constant $a\neq 0$ and $f\in L^2(\R)$; that $|m(\xi)|\lesssim 1+|\xi|^\gamma$ for all $\xi\in \R$; and making a change of variables, we get
\begin{align*}
\|E\|_{L^2(\R)} & \leq \lambda^{\delta}\left(\int_{\R} |m(\xi)-m(\lambda^\delta \xi)|^2 \xi^2 |\widehat{u}(\lambda^{\delta}\xi)|^2\dxi\right)^{1/2} \\
& \leq \lambda^{-\delta/2}\left(\int_{\R} |m(\lambda^{-\delta}y)-m(y)|^2 y^2 |\widehat{u}(y)|^2\dy\right)^{1/2} \\
&\leq \lambda^{-\delta/2}\|u\|_{H^{1+\gamma}(\R)} \\
& \lesssim \lambda^{-1-\delta/2}.
\end{align*}
Now we set $w=v-u_l$. Then $w$ solves the equation
\begin{align*}
\partial_t w-w\partial_x w+v\partial_x w+w\partial_x v+L(\partial_x w) & = E, \\
w(0,x) & = 0.
\end{align*}
We want to estimate $\|w\|_{L^2(\R)}$. This is obviously equal to $0$ at time $t=0$, and we therefore estimate the change in time:
\begin{equation*}
\frac{1}{2}\frac{\mathrm{d}}{\mathrm{d}t}\|w(t)\|_{L^2(\R)}^2=\int_{\R}wE+w^2\partial_x w-w v\partial_x w-w^2\partial_x v-wL(\partial_x w) \dx.
\end{equation*}
As $m$ is even, we readily calculate that the last term vanishes:
\begin{equation*}
\int_{\R}wL(\partial_x w) \dx=\int_{\R} \widehat{w}(-\xi)m(\xi)\mathrm{i} \xi \widehat{w}(\xi)\dxi=0.
\end{equation*}
Clearly, the term $w^2\partial_x w=\frac{1}{3}\partial_x w^3$ also vanishes upon integrating, and using integration by parts we find
\begin{equation*}
\int_{\R}w v\partial_x w+w^2\partial_x v\dx=\frac{1}{2}\int_{\R}w^2\partial_x v\dx.
\end{equation*}
Thus
\begin{equation*}
\frac{1}{2}\frac{\mathrm{d}}{\mathrm{d}t}\|w(t)\|_{L^2(\R)}^2\leq \|w\|_{L^2(\R)}\|E\|_{L^2(\R)}+\|\partial_x v\|_{L^\infty}\|w\|_{L^2(\R)}^2,
\end{equation*}
and equivalently
\begin{align}
\frac{\mathrm{d}}{\mathrm{d}t}\|w(t)\|_{L^2(\R)} & \leq \|E\|_{L^2(\R)}+\|\partial_x v\|_{L^\infty}\|w\|_{L^2(\R)} \nonumber \\
& \lesssim \lambda^{-1-\delta/2}+\lambda^{-1-\delta}\|w\|_{L^2(\R)} \nonumber \\
& \lesssim \lambda^{-1-\delta/2}, \label{eq: L2 norm w}
\end{align}
where we used that $\|w\|_{L^2(\R)}=0$ at $t=0$.
For $r>3/2$ and $t\in [0, T_0]$, \eqref{eq: Hr norm u} and Lemma \ref{lem: norm u_l} gives the rough estimate
\begin{equation}
\label{eq: H^r norm w}
\|w(t)\|_{H^r(\R)}\leq \|v(t)\|_{H^r(\R)}+\|u_l(t)\|_{H^r(\R)}\lesssim \lambda^{-1+\delta/2}.
\end{equation}
Interpolating between \eqref{eq: L2 norm w} and \eqref{eq: H^r norm w} for $0<k<r$, we get
\begin{equation*}
\|w\|_{H^k(\R)}\leq \|w\|_{L^2(\R)}^{1-k/r}\|w\|_{H^r(\R)}^{k/r}\lesssim \lambda^{-1-\delta/2+\delta k/r}.
\end{equation*}
This proves the result.
\end{proof}

\begin{proof}[Proof of Lemma \ref{lem: error estimate line}]
Substituting $u^{\omega,\lambda}=u_l+u^h$ into equation \eqref{eq: whitham} we get the following expression:
\begin{align*}
F = & \partial_t u^h+u_l \partial_x u^h+u^h\partial_x u_l+u^h\partial_x u^h+L(\partial_x u^h) \\
&+\partial_t u_l +u_l\partial_x u_l+L(\partial_x u_l).
\end{align*}
Considering the fact that $u_l$ solves equation \eqref{eq: whitham} we get that the second line is zero. Computing $\partial_t u^h$, we get
\begin{equation}
\label{eq: u_t^h}
\partial_t u^h=\omega \lambda^{-\delta/2-s}\varphi_\lambda \sin(\Phi)+\lambda m(\lambda)\lambda^{-\delta/2-s}\varphi_\lambda\sin(\Phi).
\end{equation}
Since $\varphi_\lambda \tilde{\varphi}_\lambda=\varphi_\lambda$, we can write the first term on the right hand side of \eqref{eq: u_t^h} as
\begin{equation*}
\omega \lambda^{-\delta/2-s}\varphi_\lambda \sin(\Phi)=\lambda u_l(0,x)\lambda^{-\delta/2-s}\varphi_\lambda(x)\sin(\Phi).
\end{equation*}
Calculating $\partial_x u^h$, we get
\begin{equation*}
\partial_x u^h=- \lambda^{-\delta/2-s+1}\varphi_\lambda(x)\sin(\Phi)+\lambda^{-3\delta/2-s}\varphi_\lambda'(x)\cos(\Phi).
\end{equation*}
Thus
\begin{align}
\partial_t u^h+u_l \partial_x u^h = & (u_l(0,x)-u_l(t,x))\lambda^{-\delta/2-s+1}\varphi_\lambda(x)\sin(\Phi) \nonumber \\
& -u_l(t,x)\lambda^{-3\delta/2-s}\varphi_\lambda'(x)\cos(\Phi) \nonumber \\
& + \lambda m(\lambda)\lambda^{-\delta/2-s}\varphi_\lambda\sin(\Phi). \label{eq: u_t^h+u_l u_x^h}
\end{align}
Consider now the term $L(\partial_x u^h)$:
\begin{equation}
\label{eq: L u_x^h}
L(\partial_x u^h)=\lambda^{-3\delta/2 -s}L(\varphi_\lambda '\cos(\Phi))-\lambda^{-\delta/2-s+1}L(\varphi_\lambda \sin(\Phi)).
\end{equation}
Using the ancient trick of adding and subtracting, we can write
\begin{equation*}
L(\varphi_\lambda \sin(\Phi))=\left[ L, \varphi_\lambda \right] \sin(\Phi)+\varphi_\lambda L(\sin(\Phi)).
\end{equation*}
Considering $\sin(\Phi)$ as a tempered distribution, it is an easy exercise to show that $ L(\sin(\Phi))=m(\lambda)\sin(\Phi)$ for $\lambda\gg 1$, and thus the term $-\lambda^{-\delta/2-s+1}\varphi_\lambda L(\sin(\Phi))$ we get from the second term in \eqref{eq: L u_x^h} cancels out with the last term in \eqref{eq: u_t^h+u_l u_x^h}. We can therefore write the error $F$ as
\begin{align*}
F  =  & (u_l(0,x)-u_l(t,x))\lambda^{-\delta/2-s+1}\varphi_\lambda(x)\sin(\Phi) \nonumber \\
& -u_l(t,x)\lambda^{-3\delta/2-s}\varphi_\lambda'(x)\cos(\Phi) \nonumber \\
& + \lambda^{-3\delta/2 -s}L(\varphi_\lambda '\cos(\Phi)) \nonumber \\
& -\lambda^{-\delta/2-s+1}\left[ L, \varphi_\lambda \right] \sin(\Phi) \nonumber \\
& +u^h\partial_x u_l \nonumber \\
& +u^h\partial_x u^h \nonumber \\
=: & F_1+F_2+F_3+F_4+F_5+F_6. 
\end{align*}
We will estimate the terms $F_1$ to $F_6$ separately, starting with $F_1$:
\begin{align}
\|F_1\|_{L^2(\R)} & =\|(u_l(0,x)-u_l(t,x))\lambda^{-\delta/2-s+1}\varphi_\lambda(x)\sin(\Phi)\|_{L^2(\R)} \nonumber \\
& \leq \lambda^{-\delta/2-s+1}\|\varphi_\lambda(x)\sin(\Phi)\|_{L^\infty}\|u_l(0,x)-u_l(t,x)\|_{L^2(\R)} \nonumber \\
& \lesssim \lambda^{-\delta/2-s+1}\|u_l(0,x)-u_l(t,x)\|_{L^2(\R)}. \label{eq: estimate F1}
\end{align}
Obviously, at time $t=0$, $\|u_l(t,x)-u_l(0,x)\|_{L^2(\R)}=0$, and we therefore estimate the change in time:
\begin{align*}
\frac{1}{2}\frac{\mathrm{d}}{\mathrm{d}t}\|u_l(t,x) &-u_l(0,x)\|_{L^2(\R)}^2 \\
& =\frac{1}{2}\frac{\mathrm{d}}{\mathrm{d}t} \int_{\R} |u_l(t,x)|^2+|u_l(0,x)|^2-2u_l(t,x)u_l(0,x)\dx \\
& = \int_{\R} \partial_t u_l(t,x) u_l(t,x)\dx-\int_{\R} \partial_t u_l(t,x)u_l(0,x)\dx \\
& \lesssim \|\partial_t u_l(t,x)\|_{L^2(\R)}\|u_l(t,x)-u_l(0,x)\|_{L^2(\R)},
\end{align*}
which implies that
\begin{equation*}
\frac{\mathrm{d}}{\mathrm{d}t}\|u_l(t,x)-u_l(0,x)\|_{L^2(\R)}\lesssim \|\partial_t u_l(t,x)\|_{L^2(\R)}.
\end{equation*}
Solving \eqref{eq: cauchy problem u_l} for $\partial_t u_l$, we get
\begin{align}
\|\partial_t u_l(t,x)\|_{L^2(\R)} &\leq \|u_l \partial_x u_l\|_{L^2(\R)}+\|L(\partial_x u_l)\|_{L^2(\R)} \nonumber \\
& \lesssim \|u_l\|_{L^2(\R)}\|\partial_x u_l\|_{L^\infty}+\|L(\partial_x u_l)\|_{L^2(\R)}. \label{eq: partial_t u_l}
\end{align}
The first term on the right hand side is easily seen to satisfy $\|u_l\|_{L^2(\R)}\|\partial_x u_l\|_{L^\infty}\lesssim \lambda^{-2+\delta}$ for all $0\leq t\leq T_0$ by Lemma \ref{lem: norm u_l} and Sobolev embeddings. Using Proposition \ref{prop: approx scaling}, we find that
\begin{align*}
\|L(\partial_x u_l)\|_{L^2(\R)} & \lesssim \|\partial_x u_l\|_{H^\gamma (\R)} \\
& \lesssim \|\partial_x v\|_{H^\gamma (\R)} +\lambda^{-1-\delta/2+\delta(\gamma+1)/r} \\
& \lesssim \lambda^{-1-\delta/2}+\lambda^{-1-\delta/2+\delta(\gamma+1)/r}
\end{align*}
for all $0\leq t\leq T_0$. Choosing $r>2(\gamma+1)$, we conclude from \eqref{eq: partial_t u_l} that
\begin{equation*}
\frac{\mathrm{d}}{\mathrm{d}t}\|\partial_t u_l\|_{L^2(\R)}\lesssim \lambda^{-2+\delta},
\end{equation*}
and as a consequence
\begin{equation*}
\|u_l(0,x)-u_l(t,x)\|_{L^2(\R)}\lesssim \lambda^{-2+\delta}.
\end{equation*}
From \eqref{eq: estimate F1} we then get that
\begin{equation}
\label{eq: F_1}
\|F_1\|_{L^2(\R)}\lesssim \lambda^{-1+\delta/2-s}
\end{equation}
for all $0\leq t\leq T_0$.

Using Lemmas \ref{lem: norm varphi sin} and \ref{lem: norm u_l}, we readily obtain
\begin{equation}
\label{eq: F_2}
\|F_2\|_{L^2(\R)}\lesssim \lambda^{-3\delta/2-s}\|u_l\|_{L^2(\R)}\|\varphi_\lambda'(x)\cos(\Phi)\|_{L^2(\R)}\lesssim \lambda^{-\delta/2-1-s}
\end{equation}
for all $0\leq t\leq T_0$.
For $F_3$ the estimate is equally straightforward:
\begin{align*}
\|F_3\|_{L^2(\R)} & =\lambda^{-3\delta/2 -s}\|L(\varphi_\lambda '\cos(\Phi))\|_{L^2(\R)}  \\
& \lesssim \lambda^{-3\delta/2 -s}\|\varphi_\lambda '\cos(\Phi)\|_{H^\gamma (\R)}  \\
& \lesssim \lambda^{-\delta-s+\gamma}. 
\end{align*}
As $1\leq \gamma<2$, we can choose $\delta\in (1,2)$ bigger than $\gamma$ such that 
\begin{equation}
\label{eq: F_3}
\|F_3\|_{L^2(\R)}\lesssim \lambda^{-s-\varepsilon}
\end{equation}
for some $\varepsilon>0$.
This estimate, in fact, holds for all $t\in \R$.

Considering $\sin(\Phi)$ as a tempered distribution and using the symmetry of $m(\xi)$, we get
\begin{align*}
\|\left[ L, \varphi_\lambda \right] \sin(\Phi)\|_{L^2(\R)}^2 &=\int_{\R} |\widehat{\varphi_\lambda \sin(\Phi)}(m(\xi)-m(\lambda))|^2\dxi \\
& \simeq \int_{\R} |\widehat{\varphi_\lambda}(\xi-\lambda)(m(\xi)-m(\lambda))|^2\dxi \\
& =\int_{\R} |\lambda^\delta \widehat{\varphi}(\lambda^\delta (\xi-\lambda))(m(\xi)-m(\lambda))|^2\dxi.
\end{align*}
As $\widehat{\varphi}$ is a rapidly decreasing function, 
\begin{equation*}
\lim_{\lambda \rightarrow \infty}\lambda^q \widehat{\varphi}(\lambda x)\rightarrow 0 \,\, \text{for all} \,\, |x|>0 \,\, \text{and all} \,\, q>0.
\end{equation*}
Thus
\begin{equation*}
\int_{|\xi-\lambda|\geq \lambda^{-p}} |\lambda^\delta \widehat{\varphi}(\lambda^\delta (\xi-\lambda))(m(\xi)-m(\lambda))|^2\dxi=\bigO(\lambda^{-q})
\end{equation*}
for any $0<p<\delta$ and all $q>0$ when $\lambda\gg 1$. For $|\xi-\lambda|<\lambda^{-p}$ we calculate
\begin{align*}
\int_{|\xi-\lambda|< \lambda^{-p}} |\lambda^\delta \widehat{\varphi}(\lambda^\delta(\xi-\lambda))(m(\xi)-m(\lambda))|^2\dxi & = \lambda^{2\delta} \int_{|y|<\lambda^{-p}}|\widehat{\varphi}(\lambda^\delta y)|^2 |m(\lambda +y)-m(\lambda)|^2\dy \\
& \lesssim \lambda^{2\delta+2\gamma-2}\int_{|y|<\lambda^{-p}}|\widehat{\varphi}(\lambda^\delta y)|^2 |y|^2\dy \\
&=\lambda^{2\gamma-2-\delta} \int_{|y|<\lambda^{\delta-p}}|\widehat{\varphi}(y)|^2 |y|^2\dy \\
& \simeq \lambda^{2\gamma-2-\delta},
\end{align*}
where we used that $|m(\lambda+y)-m(\lambda)|\lesssim |y|\lambda^{\gamma-1}$ for $\lambda \gg 1$ and $|y|$ sufficiently small. Thus $\|\left[ L, \varphi_\lambda \right] \sin(\Phi)\|_{L^2(\R)}\lesssim \lambda^{\gamma-1-\delta/2}$, and
\begin{equation*}
\|F_4\|_{L^2(\R)}=\lambda^{-\delta/2-s+1}\|\left[ L, \varphi_\lambda \right] \sin(\Phi)\|_{L^2(\R)} \lesssim \lambda^{\gamma-s-\delta}.
\end{equation*}
By assumption, $\gamma<2$, so we may choose $\delta\in (1,2)$ such that $\delta>\gamma$ and 
\begin{equation}
\label{eq: F_4}
\|F_4\|_{L^2(\R)}\lesssim \lambda^{-s-\varepsilon}
\end{equation}
for some $\varepsilon>0$ and all $t\in\R$. The two last terms are straightforward to estimate. Using Lemmas \ref{lem: norm varphi sin} and \ref{lem: norm u_l}, we get
\begin{equation}
\label{eq: F_5}
\|F_5\|_{L^2(\R)}=\|u^h\partial_x u_l\|_{L^2(\R)}\leq \|u^h\|_{L^2(\R)}\|u_l\|_{L^\infty}\lesssim \lambda^{-s-1+\delta/2}
\end{equation}
for all $0\leq t\leq T_0$, and
\begin{align}
\|F_6\|_{L^2(\R)} & =\|u^h\partial_x u^h\|_{L^2(\R)} \nonumber \\
& \leq \lambda^{-\delta-2s+1}\|\varphi_\lambda^2 \sin(2\Phi)\|_{L^2(\R)}+\lambda^{-2\delta-2s}\|\varphi_\lambda \varphi_\lambda' \cos^2(\Phi)\|_{L^2(\R)} \label{eq: F_6}\\
& \lesssim \lambda^{-\delta/2-2s+1}+\lambda^{-3\delta/2-2s} \nonumber
\end{align}
for all $t\in \R$. The statement now follows by combining \eqref{eq: F_1}, \eqref{eq: F_2}, \eqref{eq: F_3}, \eqref{eq: F_4}, \eqref{eq: F_5} and \eqref{eq: F_6}.
\end{proof}

Now we will show that the approximate solutions $u^{\omega, \lambda}$ are arbitrarily close to exact solutions as $\lambda\rightarrow \infty$.

\begin{lemma}
\label{lem: exact approximate}
Let $s>\frac{3}{2}$ and let $u_{\omega,\lambda}$ be the $H^s(\R)$ solution to
\begin{align}
\partial_t u_{\omega,\lambda}+u_{\omega,\lambda}\partial_x u_{\omega,\lambda}+L (\partial_x u_{\omega,\lambda})=0, \nonumber \\
u_{\omega,\lambda}(0,x)=u^{\omega,\lambda}(0,x). \label{eq: cauchy problem u_omega,lambda}
\end{align}
That is, $u_{\omega,\lambda}$ is the solution to \eqref{eq: whitham} with initial data given by $u^{\omega,\lambda}$ evaluated at time $t=0$. Then there exists $T_0>0$ that can be considered independent of $\lambda$ when $\lambda\gg 1$ and $k>s$ such that
\begin{equation*}
\|u^{\omega,\lambda}-u_{\omega,\lambda}\|_{H^s(\R)}\leq \|u^{\omega,\lambda}-u_{\omega,\lambda}\|_{L^2(\R)}^{1-s/k}\|u^{\omega,\lambda}-u_{\omega,\lambda}\|_{H^k(\R)}^{s/k}\lesssim \lambda^{-\varepsilon(1-\frac{s}{k})},
\end{equation*}
for some $\varepsilon>0$, $0\leq t\leq T_0$ and $\lambda\gg 1$.
\end{lemma}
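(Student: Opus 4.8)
The plan is to mirror the periodic argument of Lemma~\ref{lem: approx exact difference s<2}. Set $w=u^{\omega,\lambda}-u_{\omega,\lambda}$; I would derive a forced equation for $w$ with zero initial data, run an $L^2(\R)$ energy estimate driven by the error $F$ of Lemma~\ref{lem: error estimate line}, bound $\|w\|_{H^k(\R)}$ crudely for $k>s$ using Corollary~\ref{cor: energy estimate}, and then interpolate between the two bounds.

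First I would observe that $\|u^{\omega,\lambda}(0,\cdot)\|_{H^s(\R)}\simeq 1$ uniformly in $\lambda$: the high-frequency part contributes $\simeq 1$ by Lemma~\ref{lem: norm varphi sin}, while $\|u_l(0,\cdot)\|_{H^s(\R)}=\bigO(\lambda^{-1+\delta/2})$. Hence Theorem~\ref{thm: local well-posedness} and Lemma~\ref{thm: existence time} produce the solution $u_{\omega,\lambda}$ on a common interval $[0,T_0]$ with $T_0\simeq 1$ independent of $\lambda$ (for $\lambda\gg 1$) and $\|u_{\omega,\lambda}(t)\|_{H^s(\R)}\lesssim 1$ there. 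Subtracting the two equations, writing $u_{\omega,\lambda}=u^{\omega,\lambda}-w$ and using the definition of $F$, I obtain
\begin{equation*}
\partial_t w=F+w\partial_x w-\partial_x\big(w\,u^{\omega,\lambda}\big)-L(\partial_x w),\qquad w(0,\cdot)=0 .
\end{equation*}
Pairing with $w$ in $L^2(\R)$: the term $\int_\R wL(\partial_x w)\dx$ vanishes by evenness of $m$ and Parseval, $\int_\R w^2\partial_x w\dx=\tfrac13\int_\R\partial_x(w^3)\dx=0$, and integration by parts bounds $\big|\int_\R w\,\partial_x(w u^{\omega,\lambda})\dx\big|$ by $\tfrac12\|\partial_x u^{\omega,\lambda}\|_{L^\infty}\|w\|_{L^2(\R)}^2$, leaving
\begin{equation*}
\tfrac12\tfrac{\mathrm d}{\mathrm dt}\|w(t)\|_{L^2(\R)}^2\le\|F(t)\|_{L^2(\R)}\|w(t)\|_{L^2(\R)}+\|\partial_x u^{\omega,\lambda}(t)\|_{L^\infty}\|w(t)\|_{L^2(\R)}^2 .
\end{equation*}
From $u^{\omega,\lambda}=u_l+u^h$, Lemma~\ref{lem: norm u_l} with a Sobolev embedding gives $\|\partial_x u_l\|_{L^\infty}\lesssim\lambda^{-1+\delta/2}$, and differentiating $u^h$ directly gives $\|\partial_x u^h\|_{L^\infty}\lesssim\lambda^{1-\delta/2-s}$; both are $o(1)$ on $[0,T_0]$ because $\delta>1$ and $s>\tfrac32$, so Gr\"onwall's inequality with $w(0,\cdot)=0$ yields $\|w(t)\|_{L^2(\R)}\lesssim\sup_{[0,T_0]}\|F\|_{L^2(\R)}\lesssim\lambda^{-s-\varepsilon}$ on $[0,T_0]$ by Lemma~\ref{lem: error estimate line}.

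Next, for $k>s$ I would use Corollary~\ref{cor: energy estimate} to get $\|u_{\omega,\lambda}(t)\|_{H^k(\R)}\lesssim\|u^{\omega,\lambda}(0,\cdot)\|_{H^k(\R)}\lesssim\lambda^{k-s}$ on $[0,T_0]$, and Lemmas~\ref{lem: norm varphi sin} and~\ref{lem: norm u_l} to get $\|u^{\omega,\lambda}(t)\|_{H^k(\R)}\lesssim\lambda^{k-s}$ for $t\in[0,T_0]$ as well (the $H^k(\R)$ norm of $u^h$ is, up to constants, constant in $t$, the extra fixed phase in $\cos(\Phi)$ being harmless in Lemma~\ref{lem: norm varphi sin}). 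This gives the rough bound $\|w(t)\|_{H^k(\R)}\lesssim\lambda^{k-s}$, and interpolation then gives
\begin{equation*}
\|w(t)\|_{H^s(\R)}\le\|w(t)\|_{L^2(\R)}^{1-s/k}\|w(t)\|_{H^k(\R)}^{s/k}\lesssim\lambda^{(-s-\varepsilon)(1-s/k)}\,\lambda^{(k-s)s/k}=\lambda^{-\varepsilon(1-s/k)},
\end{equation*}
which is exactly the asserted estimate and tends to $0$ since $k>s$.

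The substantive input here—the sharp error bound $\|F\|_{L^2(\R)}\lesssim\lambda^{-s-\varepsilon}$—is already established in Lemma~\ref{lem: error estimate line}, so I do not expect a genuine obstacle in this lemma; the argument is the same bookkeeping as in the periodic case. The two points I would be careful about are that the Gr\"onwall exponential $\exp\!\big(\int_0^{T_0}\|\partial_x u^{\omega,\lambda}\|_{L^\infty}\dt\big)$ stays bounded as $\lambda\to\infty$ (it does, since $T_0$ is fixed and the integrand is $o(1)$), and that $T_0$ may indeed be taken uniform in $\lambda$, which follows from $\|u^{\omega,\lambda}(0,\cdot)\|_{H^s(\R)}\simeq 1$ together with Lemma~\ref{thm: existence time}.
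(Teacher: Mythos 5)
Your proposal is correct and follows essentially the same route as the paper: the same $L^2(\R)$ energy estimate for the difference driven by $F$ from Lemma~\ref{lem: error estimate line} (with the $L$-term killed by evenness of $m$ and the transport terms handled by integration by parts), the same crude $H^k(\R)$ bound via Corollary~\ref{cor: energy estimate}, and the same interpolation. The only differences are cosmetic (writing the difference equation in divergence form and naming the difference $w$ instead of $v$).
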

\begin{proof}
Lemmas \ref{lem: norm varphi sin} and \ref{lem: norm u_l} imply that $\|u^{\omega,\lambda}(0)\|_{H^s(\R)}\lesssim \lambda^{-1+\delta/2}+1$, and Theorem \ref{thm: local well-posedness} and Lemma \ref{thm: existence time} then imply that $u_{\omega,\lambda}\in H^s(\R)$ exists up to some time $T\simeq 1$ for $\lambda\gg 1$. Setting $T_0<T$, for instance $T_0=\frac{1}{2}T$, we have that
\begin{equation*}
\|u_{\omega,\lambda}(t)\|_{H^s(\R)}\lesssim \lambda^{-1+\delta/2}+1
\end{equation*}
for all $0\leq t\leq T_0$. We define
\begin{equation*}
v=u^{\omega,\lambda}-u_{\omega,\lambda}.
\end{equation*}
As $u_{\omega,\lambda}$ satisfies \eqref{eq: cauchy problem u_omega,lambda}, we get that $v$ satisfies
\begin{align}
\partial_t v-v\partial_x v+u^{\omega,\lambda}\partial_x v+v\partial_x u^{\omega,\lambda}+L(\partial_x v)=F, \nonumber \\
v(0,x)=0. \label{eq: caucy problem v}
\end{align}
Using \eqref{eq: caucy problem v} we can write
\begin{equation*}
\frac{1}{2}\frac{\mathrm{d}}{\mathrm{d}t}\|v\|_{L^2(\R)}^2=\int_{\R}vF+v^2\partial_x v-vu^{\omega,\lambda}\partial_x v-v^2\partial_x u^{\omega,\lambda}-vL(\partial_x v) \dx.
\end{equation*}
We readily calculate that the last term vanishes:
\begin{equation*}
\int_{\R}vL(\partial_x v) \dx=\int_{\R} \widehat{v}(-\xi)m(\xi)\mathrm{i} \xi \widehat{v}(\xi)\dxi=0.
\end{equation*}
Clearly, the term $v^2\partial_x v=\frac{1}{3}\partial_x v^3$ also vanishes upon integrating, and using integration by parts we calculate
\begin{equation*}
\int_{\R}vu^{\omega,\lambda}\partial_x v+v^2\partial_x u^{\omega,\lambda}\dx=\frac{1}{2}\int_{\R}v^2\partial_x u^{\omega,\lambda}\dx.
\end{equation*}
Thus
\begin{equation*}
\frac{1}{2}\frac{\mathrm{d}}{\mathrm{d}t}\|v\|_{L^2(\R)}^2\leq \|v\|_{L^2(\R)}\|F\|_{L^2(\R)}+\|\partial_x u^{\omega,\lambda}\|_{L^\infty}\|v\|_{L^2(\R)}^2,
\end{equation*}
and equivalently
\begin{equation*}
\frac{\mathrm{d}}{\mathrm{d}t}\|v\|_{L^2(\R)}\leq \|F\|_{L^2(\R)}+\|\partial_x u^{\omega,\lambda}\|_{L^\infty}\|v\|_{L^2(\R)}.
\end{equation*}
Straightforward calculations give
\begin{equation*}
\|\partial_x u^h\|_{L^\infty}\lesssim \lambda^{-\delta/2-s+1},
\end{equation*}
and by Sobolev embeddins and Lemma \ref{lem: norm u_l} we get
\begin{equation*}
\|\partial_x u_l\|_{L^\infty}\lesssim \lambda^{-1+\delta/2},
\end{equation*}
and thus
\begin{equation*}
\|\partial_x u^{\omega,\lambda}\|_{L^\infty}\lesssim \lambda^{-1+\delta/2}+\lambda^{-\delta/2-s+1}.
\end{equation*}
By Lemma \ref{lem: error estimate line} we then get that
\begin{equation*}
\frac{\mathrm{d}}{\mathrm{d}t}\|v\|_{L^2(\R)} \lesssim \lambda^{-s-\varepsilon}+(\lambda^{-1+\delta/2}+\lambda^{-\delta/2-s+1}) \|v\|_{L^2(\R)}.
\end{equation*}
Since $\|v(0)\|_{L^2(\R)}=0$, $-1+\delta/2<0$ and $-\delta/2-s+1<0$ for any $\delta\in(1,2)$ when $s>\frac{3}{2}$, we deduce that
\begin{equation}
\label{eq: L2 estimate v}
\|v(t)\|_{L^2(\R)} \lesssim \lambda^{-s-\varepsilon}
\end{equation}
for $0\leq t \leq T_0$.
Moreover we have the "crude" estimate from Corollary \ref{cor: energy estimate} and Lemmas \ref{lem: norm varphi sin} and \ref{lem: norm u_l}:
\begin{align}
\|v\|_{H^k(\R)} &\leq \|u_{\omega,\lambda}(t)\|_{H^k(\R)}+\|u^{\omega,\lambda}(t)\|_{H^k(\R)} \nonumber \\
& \leq \|u^{\omega,\lambda}(0)\|_{H^k(\R)}+\|u^{\omega,\lambda}(t)\|_{H^k(\R)} \nonumber \\
& \lesssim \lambda^{k-s} \label{eq: Hk estimate v}
\end{align}
for any $k>s$ and $0\leq t\leq T_0$. Interpolating between \eqref{eq: L2 estimate v} and \eqref{eq: Hk estimate v} for $k>s$ gives the result.
\end{proof}

We can now conclude the proof of Theorem \ref{thm: line}.
\begin{proof}[Proof of Theorem \ref{thm: line}]
Assume first that $s>\frac{3}{2}$. Let $T_0$ be as in Lemma \ref{lem: exact approximate}. Set $\omega_1=1$ and $\omega_2=-1$. At time $t=0$ we have
\begin{equation}
\label{eq: time t=0}
\|u_{1,\lambda}(0)-u_{-1,\lambda}(0)\|_{H^s(\R)}=2\lambda^{-1}\|\tilde{\varphi}_\lambda \|_{H^s(\R)}\lesssim \lambda^{-1+\delta/2}.
\end{equation}
For time $0<t\leq T_0$, we make repeated use of the triangle inequality and Lemma \ref{lem: exact approximate} to get
\begin{align}
\|u_{1,\lambda}-u_{-1,\lambda}\|_{H^s(\R)} \geq & \|u^{1,\lambda}-u^{-1,\lambda}\|_{H^s(\R)} \label{eq: time t>0}\\
& -\|u^{1,\lambda}-u_{1,\lambda}\|_{H^s(\R)}  \nonumber \\
& - \|u^{-1,\lambda}-u_{-1,\lambda}\|_{H^s(\R)} \nonumber \\
\gtrsim & \|u^{1,\lambda}-u^{-1,\lambda}\|_{H^s(\R)} -\lambda^{-\varepsilon(1-\frac{s}{k})}. \nonumber
\end{align}
By basic trigonometry,
\begin{equation}
\label{eq: trigonometry}
u^{1,\lambda}-u^{-1,\lambda}=u_{l,1,\lambda}-u_{l,-1,\lambda}+2\lambda^{-\delta/2-s}\varphi_\lambda \sin(-\lambda m(\lambda) t+ \lambda x)\sin(t). 
\end{equation}
From \eqref{eq: u_l Hs norm}, \eqref{eq: time t>0} and \eqref{eq: trigonometry} we conclude that
\begin{equation}
\label{eq: t>0}
\|u_{1,\lambda}-u_{-1,\lambda}\|_{H^s(\R)}\gtrsim \sin(t) +\lambda^{-1+\delta/2}-\lambda^{-\varepsilon(1-\frac{s}{k})}.
\end{equation}
As $1<\delta<2$ and $k>s$, the terms involving $\lambda$ on the right hand side go to $0$ as $\lambda\rightarrow \infty$. Picking an increasing sequence $\lbrace \lambda_n \rbrace_n$ such that $\lambda_n\rightarrow \infty$ as $n\rightarrow \infty$, we get two sequences of solutions $\lbrace u_{\pm 1, \lambda_n}\rbrace_n$ that at $t=0$ converges in $H^s(\R)$ as $n\rightarrow \infty$ (cf. \eqref{eq: time t=0}), while at times $t>0$ are bounded apart independently of $n$. This proves part (i).

Assume now that $m$ satisfies in addition the lower bound $|m(\xi)|\gtrsim |\xi|^r$ for $|\xi|\gg 1$ for some $1<r<\gamma$. Let $0<s< \frac{r}{2}$ and assume Theorem \ref{thm: local well-posedness} is true in this case. That is, given $u_0\in H^s(\R)$, an $H^s(\R)$ solution exists up to some time $T$ that depends only on $\|u_0\|_{H^s(\R)}$. Thus, letting $u_{\omega,\lambda}$ be as in Lemma \ref{lem: exact approximate}, there is a $T_0$ that can be considered independent of $\lambda$ such that $u_{\omega,\lambda}\in H^s(\R)$ for $0\leq t\leq T_0$ for all $\lambda>1$. The arguments establishing \eqref{eq: L2 estimate v} are valid when $\delta/2+s>1$, which can be achieved for any $s>0$ by choosing $2-2s<\delta<2$. Note that this condition on $\delta$ and the one in Lemma \ref{lem: error estimate line} can be satisfied simultaneously and thus $\delta\in (1,2)$ can be chosen such that \eqref{eq: L2 estimate v} holds also in the present case. The difficulty is to establish \eqref{eq: Hk estimate v} for $0\leq t\leq T_0$, as Lemma \ref{thm: existence time} is valid only for $s>\frac{3}{2}$ and we can therefore only use that result and Corollary \ref{cor: energy estimate} to get estimates for $0\leq t \leq T\simeq \lambda^{s-\delta}$ for any $\delta>\frac{3}{2}$ when $s\leq \frac{3}{2}$. For a solution $u$ of \eqref{eq: whitham} the quantities
\begin{equation*}
\int_\R u^2\dx
\end{equation*}
and
\begin{equation*}
\frac{1}{2}\int_\R uL u\dx-\frac{1}{6}\int_\R u^3 \dx
\end{equation*}
are preserved (see Lemma 1 in \cite{abh}). If $|m(\xi)|\gtrsim |\xi|^r$, then 
\begin{equation*}
\int_\R |uL u|+u^2\dx\gtrsim \|u\|_{H^{r/2}(\R)}^2.
\end{equation*}
For $r>\frac{1}{3}$ we can interpolate to get
\begin{equation*}
\left| \int_\R u^3\dx\right| \lesssim \|u\|_{H^{1/6}(\R)}^3\lesssim \|u\|_{L^2(\R)}^{3-1/r}\|u\|_{H^{r/2}(\R)}^{1/r}.
\end{equation*}
If $r>\frac{1}{2}$, then $\left|\int_\R uL u \dx\right|$ increases faster than $\left| \int_\R u^3\dx\right|$ as $\|u\|_{H^{r/2}(\R)}$ increases, as the $L^2(\R)$ norm is preserved. This implies that
\begin{equation}
\|u(t)\|_{H^{r/2}(\R)}\simeq \|u(0)\|_{H^{r/2}(\R)}
\end{equation}
for all $t\in \R$, and hence \eqref{eq: Hk estimate v} holds for $k=r/2$, and interpolating between \eqref{eq: L2 estimate v} and \eqref{eq: Hk estimate v} we get that Lemma \ref{lem: exact approximate} holds for $0<s< \frac{r}{2}$. Thus we get \eqref{eq: time t=0} and \eqref{eq: t>0} exactly as before.
\end{proof}

\noindent
{\bf Acknowledgements.} The author would like to thank Alexander Himonas and Vera Hur for their valuable input and advice during the research for this paper.

\medskip

\bibliographystyle{plain}
\bibliography{non-uniform}

\end{document}